\documentclass{article}
\usepackage{graphicx} 
\usepackage[utf8]{inputenc}
\usepackage{amsfonts,amsmath,amssymb,amsthm}
\usepackage{mathtools}
\usepackage{fancyhdr}
\usepackage[left=1in,right=1in,top=1in,bottom=1in]{geometry}
\usepackage{tabularx}
\usepackage{array}
\usepackage[hidelinks]{hyperref}

\theoremstyle{theorem}
\newtheorem{definition}{Definition}[section]
\newtheorem{theorem}[definition]{Theorem}
\newtheorem{proposition}[definition]{Proposition}
\newtheorem{lemma}[definition]{Lemma}

\newtheorem{corollary}[definition]{Corollary}

\theoremstyle{remark}
\newtheorem{remark}[definition]{Remark}
\newtheorem{example}[definition]{Example}
\newtheorem{question}[definition]{Question}

\title{Reflexive symmetric differentials and quotients of bounded symmetric domains}
\author{Aryaman Patel}
\date{}

\begin{document}

\maketitle

\begin{abstract}
    For each classical irreducible bounded symmetric domain $\mathcal{D}$, Klingler has computed the minimum number $m_{\mathcal{D}}$ such that any smooth projective quotient $X=\mathcal{D}/\Gamma$, for $\Gamma\in\textrm{Aut}^0(\mathcal{D})$, satisfies $H^0(X,\mathrm{Sym}^i\Omega^1_X)=0$ for $0<i<m_{\mathcal{D}}$. In this article, we extend Klingler's result to the case when $X$ is normal and projective. This, together with a normal version of Arapura's result about the relationship between the vanishing of global symmetric differentials on $X$ and the rigidity of finite dimensional representations of $\pi_1(X)$, gives rigidity statements for representations of $\pi_1(X)$ and $\pi_1(X_{reg})$ in a low dimensional range, when $X$ is a normal projective quotient of a bounded symmetric domain. 
\end{abstract}

\section*{Notation and conventions}

We work with algebraic varieties over the field $\mathbb{C}$ of complex numbers. Let $X$ be a normal projective variety and let $X_{reg}$ be the smooth locus of $X$. The tangent sheaf of $X$ is denoted by $\mathcal{T}_X$, and is a  reflexive sheaf. The sheaf of reflexive differentials is denoted by $\Omega^{[1]}_X$, where $\Omega^{[1]}_X=(\Omega^1_X)^{\vee\vee}=\mathcal{T}^\vee_X$. We are interested in reflexive symmetric powers of $\Omega^{[1]}_X$. We denote by $\mathrm{Sym}^{[i]}\Omega^1_X$ the $i$-th reflexive symmetric power of $\Omega^1_X$, i.e., $\mathrm{Sym}^{[i]}\Omega^1_X=(\mathrm{Sym}^i\Omega^1_X)^{\vee\vee}$, for $i\ge1$. 

\section{Introduction}

The aim of this work is to explore the relationship between representations of fundamental groups and vanishing of global reflexive symmetric differentials on $X$, when $X$ is normal and projective.

The articles \cite{klingler} and \cite{bkt}, on which the present work is largely based, arose from a question asked by H. Esnault. She asked whether a smooth complex projective variety with infinite fundamental group admits a non-zero global symmetric differential. The authors in \cite{bkt} proved a result based on this question, in the more general setting of compact K\"{a}hler manifolds. It is a simple observation that this result holds in the normal, projective setting. Namely, 

\begin{proposition}[{\cite[Theorem 0.1]{bkt}} in the compact K\"ahler case]\label{psing}
    Let $X$ be a normal projective variety. Suppose there is a finite dimensional representation of $\pi_1(X)$ over some field with infinite image. Then $X$ admits a non-zero reflexive symmetric differential.
\end{proposition}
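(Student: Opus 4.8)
The plan is to reduce to the smooth case treated in \cite{bkt} by passing to a resolution of singularities and then descending the resulting symmetric differential back to $X$ as a reflexive section. First I would fix a resolution $\pi\colon \tilde X \to X$ that is an isomorphism over the smooth locus $X_{reg}$; such a resolution exists by Hironaka, and $\tilde X$ is a smooth projective variety. The two features I intend to exploit are that the exceptional locus lies over $X_{sing}$, which has codimension $\ge 2$ in the normal variety $X$, and that a reflexive sheaf is determined by its restriction to the big open set $X_{reg}$.

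The first key step is to transport the hypothesis on fundamental groups from $X$ to $\tilde X$. Since $X$ is klt it has rational singularities, and for a resolution of such a variety the induced homomorphism $\pi_*\colon \pi_1(\tilde X)\to \pi_1(X)$ is an isomorphism (Takayama, building on Kollár). Consequently the given finite-dimensional representation $\rho\colon \pi_1(X)\to \mathrm{GL}_n(k)$ with infinite image pulls back to the representation $\rho\circ\pi_*$ of $\pi_1(\tilde X)$ over the same field $k$, again with infinite image. Applying the compact K\"ahler (here, smooth projective) case of \cite[Theorem 0.1]{bkt} to $\tilde X$, I obtain a nonzero global symmetric differential $0\neq \omega \in H^0(\tilde X, \mathrm{Sym}^i\Omega^1_{\tilde X})$ for some $i>0$.

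It remains to descend $\omega$ to a reflexive symmetric differential on $X$. Restricting $\omega$ to the dense open set $\pi^{-1}(X_{reg})$ keeps it nonzero, because $\mathrm{Sym}^i\Omega^1_{\tilde X}$ is locally free, hence torsion-free, and $\pi^{-1}(X_{reg})$ is a nonempty open subset of the irreducible variety $\tilde X$. Since $\pi$ restricts to an isomorphism $\pi^{-1}(X_{reg})\xrightarrow{\sim} X_{reg}$, this yields a nonzero section in $H^0(X_{reg},\mathrm{Sym}^i\Omega^1_{X_{reg}})$. Finally, because $X$ is normal, $X_{reg}$ has complement of codimension $\ge 2$, and the reflexive sheaf $\mathrm{Sym}^{[i]}\Omega^1_X$ equals the pushforward $j_*\bigl(\mathrm{Sym}^i\Omega^1_{X_{reg}}\bigr)$ along the inclusion $j\colon X_{reg}\hookrightarrow X$; taking global sections gives $H^0(X,\mathrm{Sym}^{[i]}\Omega^1_X)=H^0(X_{reg},\mathrm{Sym}^i\Omega^1_{X_{reg}})$. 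The section constructed above therefore produces a nonzero element of $H^0(X,\mathrm{Sym}^{[i]}\Omega^1_X)$, completing the argument.

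I expect the main subtlety to lie in the fundamental-group comparison rather than in the sheaf-theoretic descent: one must invoke the precise statement that resolutions of klt (more generally, rational) singularities preserve $\pi_1$, and verify that the representation pulls back with infinite image. The descent itself is formal once $\pi$ is chosen to be an isomorphism over $X_{reg}$, since then no extension across the exceptional divisor is needed—only the elementary fact that global sections of a reflexive sheaf are read off from the big open locus. I note that this route avoids the full extension theorem for reflexive differentials of Greb--Kebekus--Kov\'acs--Peternell, which would instead be required if one tried to pull forms \emph{up} from $X$ to $\tilde X$.
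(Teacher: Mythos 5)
Your proposal is correct and follows essentially the same route as the paper: resolve singularities, invoke Takayama's theorem that $\pi_1(\Tilde{X})=\pi_1(X)$ for klt $X$, apply the smooth case of \cite[Theorem 0.1]{bkt} to $\Tilde{X}$, and transport the resulting nonzero symmetric differential to $X_{reg}\cong\pi^{-1}(X_{reg})$, where reflexivity of $\mathrm{Sym}^{[i]}\Omega^1_X$ identifies its global sections with those over the big open set. Your remarks on torsion-freeness guaranteeing the restriction stays nonzero and on avoiding the extension theorem for reflexive differentials are accurate refinements of the argument the paper sketches.
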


\begin{remark}
    It is known that the natural map $j_*:X_{reg}\to X$ induced by the inclusion $j:X_{reg}\hookrightarrow X$, is surjective. If the kernel of $j_*$ is finite, we can replace $\pi_1(X)$ by $\pi_1(X_{reg})$ in the above statement.
\end{remark}

It is known from non-abelian Hodge theory developed by Simpson and its non-Archimedean analog that global symmetric differentials on $X$ determine the rigidity of finite dimensional linear representations of $\pi_1(X)$. This relationship can be extended to the case when $X$ is normal and projective.  
More precisely, we have the following.

\begin{theorem}[{\cite[Theorem 1.6]{klingler}} in the compact K\"ahler case]\label{thmazk}
    Let $X$ be a normal projective variety. Suppose that $H^0(X,\mathrm{Sym}^{[i]}\Omega^1_X)=0$ for all $1\le i\le r$, for some $r\in\mathbb{N}$. Then
    \begin{enumerate}
        \item Any representation $\rho:\pi_1(X)\to GL(r,\mathbb{C})$ is rigid (i.e., the affine variety \\$\mathrm{Hom}(\pi_1(X),GL(r,\mathbb{C}))//GL(r,\mathbb{C})$ is zero dimensional).
        \item Let $F$ be a non-Archimedean local field. Then any reductive representation $\rho:\pi_1(X)\to GL(r,F)$ has bounded image. Here $\rho$ is said to be reductive if the Zariski-closure of $\rho(\pi_1(X))$ is a reductive subgroup of $GL(r)/F$. It has bounded image if $\rho(\pi_1(X))$ is contained in a compact subgroup of $GL(r,F)$ for the topology of $GL(r,F)$ defined by the topology of the local field $F$. 
        
        If moreover $F$ has characteristic zero the reductiveness assumption is not needed.
    \end{enumerate}
\end{theorem}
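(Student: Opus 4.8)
The plan is to reduce the statement to the already-known smooth compact Kähler case, namely \cite[Theorem 1.6]{klingler} applied to a resolution of $X$. I would fix a resolution of singularities $\pi:\tilde X\to X$ that is an isomorphism over the smooth locus $X_{reg}$; since $X$ is projective, $\tilde X$ is a smooth projective (in particular compact Kähler) variety. The reduction rests on two inputs. First, because $X$ is klt, Takayama's theorem on the local simple connectedness of resolutions of log-terminal singularities gives that the natural homomorphism on fundamental groups $\pi_*:\pi_1(\tilde X)\to\pi_1(X)$ is an isomorphism. Second, I claim the hypothesis transfers, i.e. $H^0(\tilde X,\mathrm{Sym}^i\Omega^1_{\tilde X})=0$ for all $1\le i\le r$.

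For the second input, recall that for the normal variety $X$ with inclusion $j:X_{reg}\hookrightarrow X$ of the smooth locus (whose complement has codimension $\ge 2$), reflexivity gives the identification $\mathrm{Sym}^{[i]}\Omega^1_X=(\mathrm{Sym}^i\Omega^1_X)^{\vee\vee}=j_*\,\mathrm{Sym}^i\Omega^1_{X_{reg}}$. Restricting a global section over the dense open set $\pi^{-1}(X_{reg})\cong X_{reg}$, over which $\pi$ is an isomorphism, defines a map
\[
H^0(\tilde X,\mathrm{Sym}^i\Omega^1_{\tilde X})\longrightarrow H^0(X_{reg},\mathrm{Sym}^i\Omega^1_{X_{reg}})=H^0(X,\mathrm{Sym}^{[i]}\Omega^1_X).
\]
This map is injective because $\mathrm{Sym}^i\Omega^1_{\tilde X}$ is locally free, hence torsion free, so a section vanishing on a dense open subset of the irreducible variety $\tilde X$ is zero. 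Thus the assumed vanishing $H^0(X,\mathrm{Sym}^{[i]}\Omega^1_X)=0$ forces $H^0(\tilde X,\mathrm{Sym}^i\Omega^1_{\tilde X})=0$ for every $1\le i\le r$. Only normality of $X$ is used here; it is the first input that genuinely invokes the klt hypothesis.

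With both inputs in hand I would apply \cite[Theorem 1.6]{klingler} to the smooth projective variety $\tilde X$ and transport the conclusions along the isomorphism $\pi_*:\pi_1(\tilde X)\xrightarrow{\sim}\pi_1(X)$. For part (1), precomposition with the fixed isomorphism $\pi_*$ identifies the two representation varieties $GL(r,\mathbb{C})$-equivariantly, so $\mathrm{Hom}(\pi_1(X),GL(r,\mathbb{C}))//GL(r,\mathbb{C})\cong\mathrm{Hom}(\pi_1(\tilde X),GL(r,\mathbb{C}))//GL(r,\mathbb{C})$, and zero-dimensionality of the latter (the rigidity statement on $\tilde X$) yields rigidity on $X$. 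For part (2), given $\rho:\pi_1(X)\to GL(r,F)$ the representation $\rho\circ\pi_*:\pi_1(\tilde X)\to GL(r,F)$ has the same image subgroup $\rho(\pi_1(X))$, and is reductive exactly when $\rho$ is (the Zariski closures of the two images coincide), so the boundedness conclusion and the characteristic-zero improvement transfer verbatim.

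I expect the reduction itself to be routine; the substantive point, and the only place klt enters, is the isomorphism $\pi_1(\tilde X)\cong\pi_1(X)$, which is precisely what allows the representation-theoretic statements to be moved between $X$ and its resolution. An alternative, resolution-free route would run Simpson's non-abelian Hodge theory (and, for (2), the Gromov--Schoen theory of harmonic maps to Bruhat--Tits buildings) directly on the quasi-projective $X_{reg}$ and feed the resulting Higgs-theoretic data into the characteristic-polynomial/spectral argument that extracts symmetric differentials from a nonzero Higgs field. Controlling that analysis near the singularities is delicate, so the reduction to $\tilde X$ is the efficient path; the main obstacle is therefore to marshal the klt inputs cleanly rather than to establish any new analytic statement.
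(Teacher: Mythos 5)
Your proposal is correct and follows essentially the same route as the paper: resolve $X$, invoke Takayama's theorem that $\pi_1(\tilde X)\cong\pi_1(X)$ for klt singularities, transfer the vanishing hypothesis between $\mathrm{Sym}^{[i]}\Omega^1_X$ and $\mathrm{Sym}^i\Omega^1_{\tilde X}$ by restricting sections to the locus where the resolution is an isomorphism, and then apply Klingler's smooth-case theorem. The only cosmetic difference is that the paper phrases part (1) in the contrapositive (Arapura's formulation: infinitely many non-isomorphic semisimple representations force a nonzero reflexive symmetric differential), whereas you transfer the vanishing hypothesis directly; the content is identical.
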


Theorem \ref{thmazk}(1) was first proved by Arapura in the smooth projective setting (\cite[Proposition 2.4]{arapura}), and was later generalized to the compact K\"{a}hler setting by Klingler (\cite[Theorem 1.6(i)]{klingler}). Theorem \ref{thmazk}(2) for compact K\"ahler manifolds can be deduced from the work of Zuo in the characteristic zero case (\cite[Section 4.1.4]{zuo}), and was proved by Klingler in the general case (\cite[Theorem 1.6(ii)]{klingler}). 

The proofs of Proposition \ref{psing} and Theorem \ref{thmazk} are based on the following two facts. Let $X$ be a normal projective variety and let $f:\Tilde{X}\to X$ be any resolution of singularities. Then
\begin{enumerate}
\item The induced map $f_*:\pi_1(\Tilde{X})\to\pi_1(X)$ of topological fundamental groups is surjective. 
\item If $H^0(\Tilde{X},\textrm{Sym}^k\Omega^1_{\Tilde{X}})\neq0$, then $H^0(X,\textrm{Sym}^{[k]}\Omega^1_X)\neq0$ for any $k$.
\end{enumerate}
The first statement holds more generally for $X$ \emph{unibranch}, i.e., for any point $x\in X$, the analytic germ $(X,x)$ is irreducible (see \cite[Theorem 2.1]{arapura2}), and the second is an easy observation.\\
\\
The relationship between representations of the topological fundamental group and vanishing of global symmetric differentials lead Klingler to ask the following more precise question.

\begin{question}[{\cite[Question 1.10]{klingler}}]\label{q}
   Let $X$ be a smooth complex quasi-projective variety with $\Omega^1_X$ is positive in some sense. Can we detect the smallest $i\in\mathbb{N}$ for which $H^0(X,\mathrm{Sym}^i\Omega^1_X)\neq0$?
\end{question}

In the case that $X$ is a smooth quotient of a classical irreducible bounded symmetric domain $\mathcal{D}$ of rank $\ge2$, Question \ref{q} was answered by Klingler in \cite[Theorem 1.16]{klingler}. We can extend this to the case when $X$ is normal, and arrive at the following.

\begin{theorem}[{\cite[Theorem 1.16]{klingler}} in the smooth projective case]\label{thmk}
Let $\mathcal{D}$ be a classical irreducible bounded symmetric domain, $\Gamma\subset\mathrm{Aut}(\mathcal{D})$ a cocompact lattice acting fixed point freely in codimension $1$ on $\mathcal{D}$. Let $X=\mathcal{D}/\Gamma$ be the corresponding quotient variety. Then $H^0(X,\mathrm{Sym}^i\Omega^1_X)=0$ for $1\le i<m_{\mathcal{D}}$, where
\begin{itemize}
    \item $m_{\mathcal{D}}=\mathrm{inf}(p,q)$ if $\mathcal{D}=\mathcal{D}^I_{p,q}=SU(p,q)/S(U(p)\times U(q))$.
    \item $m_{\mathcal{D}}=[n/2]$ if $\mathcal{D}=\mathcal{D}^{II}_n=SO^*(2n)/U(n)$.
    \item $m_{\mathcal{D}}=[n/2]$ if $\mathcal{D}=\mathcal{D}^{III}_n=Sp(2n,\mathbb{R})/U(n)$.
    \item $m_{\mathcal{D}}=2$ if $\mathcal{D}=\mathcal{D}^{IV}_n=SO_0(2,n)/SO(2)\times SO(n)$.
\end{itemize}
\end{theorem}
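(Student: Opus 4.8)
The plan is to reduce the statement, which concerns the klt variety $X=\mathcal{D}/\Gamma$, to the computation of $\Gamma$-invariant holomorphic symmetric differentials on $\mathcal{D}$ itself, and then to invoke Klingler's smooth case result, whose proof I expect to be a statement about such invariants that is insensitive to whether $\Gamma$ acts freely. First I would pass to the smooth locus. Since $\Gamma$ is a cocompact lattice, $X$ is a compact quotient variety, hence projective with quotient (in particular klt) singularities, and by GAGA I may compute $H^0(X,\mathrm{Sym}^{[i]}\Omega^1_X)$ analytically. As $\mathrm{Sym}^{[i]}\Omega^1_X$ is reflexive and $X$ is normal, global sections extend across the codimension $\geq 2$ singular locus, so
$$H^0(X,\mathrm{Sym}^{[i]}\Omega^1_X)=H^0(X_{reg},\mathrm{Sym}^i\Omega^1_{X_{reg}}).$$

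Next I would describe $X_{reg}$ explicitly. Let $Z=\bigcup_{1\neq\gamma\in\Gamma}\mathrm{Fix}(\gamma)\subset\mathcal{D}$. The hypothesis that $\Gamma$ acts fixed point freely in codimension $1$ means each $\mathrm{Fix}(\gamma)$ has codimension $\geq 2$; since the action is properly discontinuous the union $Z$ is a locally finite, hence analytic, subset of codimension $\geq 2$. Writing $\mathcal{D}^0=\mathcal{D}\setminus Z$ for the free locus, the absence of codimension-$1$ fixed loci means that no nontrivial finite stabilizer $\Gamma_z$ contains a pseudo-reflection, so by Chevalley--Shephard--Todd the quotient $\mathbb{C}^n/\Gamma_z$ is singular whenever $\Gamma_z\neq 1$. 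Therefore $X_{reg}=\mathcal{D}^0/\Gamma$ exactly, and $\pi:\mathcal{D}^0\to X_{reg}$ is an \'etale covering. Pulling back along $\pi$ (which is \'etale, so $\pi^*\Omega^1_{X_{reg}}=\Omega^1_{\mathcal{D}^0}$) identifies sections with $\Gamma$-invariants:
$$H^0(X_{reg},\mathrm{Sym}^i\Omega^1_{X_{reg}})=H^0(\mathcal{D}^0,\mathrm{Sym}^i\Omega^1_{\mathcal{D}^0})^{\Gamma}.$$

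Then I would extend across $Z$. Because $\mathcal{D}$ is a smooth bounded domain, $\mathrm{Sym}^i\Omega^1_{\mathcal{D}}$ is locally free, and $Z$ has codimension $\geq 2$, so the second Riemann extension theorem gives $H^0(\mathcal{D}^0,\mathrm{Sym}^i\Omega^1_{\mathcal{D}^0})=H^0(\mathcal{D},\mathrm{Sym}^i\Omega^1_{\mathcal{D}})$ compatibly with the $\Gamma$-action. Combining the three displays yields
$$H^0(X,\mathrm{Sym}^{[i]}\Omega^1_X)=H^0(\mathcal{D},\mathrm{Sym}^i\Omega^1_{\mathcal{D}})^{\Gamma}.$$
This is precisely the object controlled by the smooth case: Klingler's proof of \cite[Theorem 1.16]{klingler} establishes the vanishing of the space of $\Gamma$-invariant holomorphic symmetric differentials on $\mathcal{D}$ for $1\leq i<m_{\mathcal{D}}$, using the Hermitian symmetric structure $\mathcal{D}=G/K$, the induced $K$-representation on $\mathrm{Sym}^i$ of the holomorphic cotangent space, and the rank of $\mathcal{D}$. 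Since these inputs depend only on $\Gamma$ being a cocompact lattice and not on freeness of the action, I would conclude the desired vanishing in the klt case.

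The main obstacle will be the last step: confirming that Klingler's argument really is an intrinsic computation of $H^0(\mathcal{D},\mathrm{Sym}^i\Omega^1_{\mathcal{D}})^{\Gamma}$ that nowhere uses freeness of the action, so that it transfers verbatim to lattices with fixed points. Secondary technical points are the precise identification $X_{reg}=\mathcal{D}^0/\Gamma$ (i.e. that ``fixed point free in codimension $1$'' rules out the pseudo-reflections that could otherwise create smooth quotient points lying over $Z$) and the careful bookkeeping between algebraic and analytic sections via GAGA and Hartogs-type extension.
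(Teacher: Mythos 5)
Your reductions are sound: for a normal compact quotient with the codimension-one freeness hypothesis one does have $H^0(X,\mathrm{Sym}^{[i]}\Omega^1_X)=H^0(X_{reg},\mathrm{Sym}^i\Omega^1_{X_{reg}})=H^0(\mathcal{D},\mathrm{Sym}^i\Omega^1_{\mathcal{D}})^{\Gamma}$, and the Chevalley--Shephard--Todd argument identifying $X_{reg}$ with the quotient of the free locus is correct. But the step you yourself flag as the main obstacle is a genuine gap, and as phrased it does not close. Klingler's proof of the smooth case is not an intrinsic computation of $\Gamma$-invariants on $\mathcal{D}$: it decomposes $\mathrm{Sym}^i\mathcal{T}_X$ into properly seminegative irreducible automorphic bundles and then invokes Mok's vanishing theorem, which is an integral/Bochner-type argument carried out on the compact \emph{smooth} quotient $\mathcal{D}/\Gamma$. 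Transferring that to a lattice with torsion would require reworking Mok's theorem in the orbifold setting, which is not ``verbatim'' and is not what you should do.

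The paper closes this gap with a one-line observation that fits perfectly at the end of your argument: by Selberg's lemma $\Gamma$ contains a torsion-free normal subgroup $\Gamma'$ of finite index, and trivially $H^0(\mathcal{D},\mathrm{Sym}^i\Omega^1_{\mathcal{D}})^{\Gamma}\subseteq H^0(\mathcal{D},\mathrm{Sym}^i\Omega^1_{\mathcal{D}})^{\Gamma'}=H^0(Y,\mathrm{Sym}^i\Omega^1_Y)$ with $Y=\mathcal{D}/\Gamma'$ smooth projective, so Klingler's theorem applies as a black box. (The paper phrases this via the quasi-\'etale Galois cover $\gamma:Y\to X$ and the isomorphism $\gamma^{[*]}\mathrm{Sym}^{[i]}\Omega^1_X\cong\mathrm{Sym}^i\Omega^1_Y$, which replaces your explicit descent to $\Gamma$-invariants, but the content is the same.) One further point you omit: the theorem allows $\Gamma\subset\mathrm{Aut}(\mathcal{D})$, and for $\mathcal{D}^I_{p,p}$ and $\mathcal{D}^{IV}_n$ this group is disconnected while Klingler's result concerns lattices in $\mathrm{Aut}^0(\mathcal{D})$; you must first pass to the finite-index subgroup $\Gamma\cap\mathrm{Aut}^0(\mathcal{D})$ (the paper's $2:1$ quasi-\'etale cover), which again only shrinks the space of invariants and so costs nothing.
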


The proof of this result in the smooth, projective setting is geometric, and is based on some deep vanishing results of Mok \cite{mok}, and classical plethysm formulas that can be found in in \cite{weyman}. 

The proof of Theorem \ref{thmk} is simply an application of Selberg's lemma (see \cite{alperin}) to reduce to the smooth projective case.

It is interesting that in the smooth projective case, these bounds are essentially sharp, i.e., given a domain $\mathcal{D}$, one can find a torsion-free lattice $\Gamma\subset\mathrm{Aut}^0(\mathcal{D})$ such that $X=\mathcal{D}/\Gamma$ satisfies $H^0(X,\mathrm{Sym}^{m_{\mathcal{D}}}\Omega^1_X)\neq0$. We do not know of a proof or examples of this. We do not know whether the bounds in the normal case are essentially sharp.

Note that Theorem \ref{thmk} is not useful in the rank 1 case. Indeed, there exist ball quotients $X$ for which $H^0(X,\Omega^1_X)\neq0$. However, for a \emph{Kottwitz lattice} $\Gamma\in SU(n,1)$ such that $n\ge2$ and $n+1$ is prime, \cite[Theorem 1.11]{klingler} shows that the corresponding compact ball quotient $X=\mathbb{B}^n/\Gamma$ admits no non-zero symmetric differentials up to order $n-1$. The proof of this result uses arithmetic methods.

Theorems \ref{thmazk} and \ref{thmk} together yield the following rigidity statements for representations of fundamental groups.

\begin{corollary}
    Let $X$ be a normal, projective quotient of an irreducible bounded symmetric domain $\mathcal{D}$. Then any representation $\rho:\pi_1(X_{reg})\to GL(r,\mathbb{C})$ is rigid for all $1\le r\le m_{\mathcal{D}}-1$. The same holds for $\pi_1(X)$.
\end{corollary}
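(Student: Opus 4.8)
The plan is to combine the two quoted theorems mechanically for $\pi_1(X)$ and then to transfer the resulting rigidity to $\pi_1(X_{reg})$ across the finite kernel of $j_*$. For $\pi_1(X)$ this is immediate: Theorem \ref{thmk} gives $H^0(X,\mathrm{Sym}^{[i]}\Omega^1_X)=0$ for every $1\le i\le m_{\mathcal D}-1$, so for any $r$ with $1\le r\le m_{\mathcal D}-1$ the hypothesis of Theorem \ref{thmazk} (vanishing for all $1\le i\le r$) is satisfied, and Theorem \ref{thmazk}(1) yields that $\mathrm{Hom}(\pi_1(X),GL(r,\mathbb C))/\!/GL(r,\mathbb C)$ is zero-dimensional. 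Since the vanishing persists for every $i\le r$, the same conclusion holds in each dimension $\le r$, a fact I use below.

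For the transfer, write $G=\pi_1(X_{reg})$, $Q=\pi_1(X)$ and $K=\ker(j_*\colon G\to Q)$. As $X$ is klt, the local fundamental groups of its singularities are finite, whence $K$ is finite (this is exactly the condition highlighted in the Remark following Proposition \ref{psing}). To show $\mathrm{Hom}(G,GL(r,\mathbb C))/\!/GL(r,\mathbb C)$ is zero-dimensional it suffices to prove that every semisimple class $[\rho]$ is isolated, i.e.\ that $H^1(G,\mathrm{Ad}\rho)=0$. Because $K$ is finite and we are in characteristic zero, $H^q(K,\mathrm{Ad}\rho)=0$ for $q>0$, so the Hochschild--Serre spectral sequence of $1\to K\to G\to Q\to 1$ degenerates to
\[
H^1(G,\mathrm{Ad}\rho)\;\cong\;H^1\!\big(Q,(\mathrm{Ad}\rho)^K\big),
\]
where $(\mathrm{Ad}\rho)^K=\mathrm{End}_K(\mathbb C^r)$ is a genuine $Q$-module.

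It remains to kill $H^1(Q,\mathrm{End}_K(\mathbb C^r))$. Reducing to $\rho$ semisimple and then to a single irreducible summand, Clifford theory identifies $\mathrm{End}_K(\mathbb C^r)$ with an induced module $\mathrm{Ind}_{Q_1}^{Q}\mathrm{End}(W)$, where $Q_1\le Q$ is the (possibly trivial) finite-index stabilizer of a $K$-isotypic component and $W$ is the associated multiplicity space, of dimension $e\le r$, on which $Q_1$ acts \emph{projectively}. Shapiro's lemma gives $H^1(Q,\mathrm{Ind}_{Q_1}^{Q}\mathrm{End}(W))\cong H^1(Q_1,\mathrm{End}(W))$, and splitting $\mathrm{End}(W)=\mathfrak{sl}(W)\oplus\mathbb C$ the trivial summand contributes nothing, since $Q_1$ inherits property (T) from the ambient higher-rank lattice and hence has finite abelianization. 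Thus the problem becomes the vanishing of $H^1(Q_1,\mathfrak{sl}(W))$, which is precisely the deformation space of the $e$-dimensional projective representation $W$ of $Q_1$. Here $Q_1=\pi_1(X')$ for a finite quasi-\'etale cover $X'\to X$, itself a projective klt quotient $\mathcal D/\Gamma'$, so Theorem \ref{thmk} supplies $H^0(X',\mathrm{Sym}^{[i]}\Omega^1_{X'})=0$ for $1\le i\le m_{\mathcal D}-1$.

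The crux, and the step I expect to be the main obstacle, is therefore a \emph{projective} analog of Theorem \ref{thmazk}: that $H^0(X',\mathrm{Sym}^{[i]}\Omega^1_{X'})=0$ for $1\le i\le e$ forces every $e$-dimensional projective representation of $\pi_1(X')$ to be rigid. I expect this to follow by running the same non-abelian Hodge / Higgs-bundle argument for the adjoint ($PGL_e$) group in place of $GL_e$: the spectral data of a rank-$e$ Higgs field still produce sections of $\mathrm{Sym}^{\le e}\Omega^1_{X'}$, so the relevant bound is $e\le r<m_{\mathcal D}$, exactly the range in which these differentials vanish. The residual points — the cross terms $\mathrm{Hom}_K(\cdot,\cdot)$ arising when $\rho$ is not irreducible, and the bookkeeping of the projective cocycles — are routine once this projective rigidity is in hand.
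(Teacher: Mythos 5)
Your treatment of $\pi_1(X)$ is correct and is exactly what the paper does: Theorem \ref{thmk2} plus Theorem \ref{thmazk}(1). The problems are all in the transfer to $\pi_1(X_{reg})$, where your argument has two genuine gaps. First, the finiteness of $K=\ker\bigl(j_*\colon\pi_1(X_{reg})\to\pi_1(X)\bigr)$ does not follow from the finiteness of the local fundamental groups of klt singularities: $K$ is the normal closure in $\pi_1(X_{reg})$ of all the local loops, and a normal subgroup generated by finitely many finite subgroups can perfectly well be infinite (for a quotient $\mathcal{D}/\Gamma$ with torsion, $K$ is the normal closure of the elliptic elements of $\Gamma$, which is typically infinite). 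The Remark after Proposition \ref{psing} is explicitly \emph{conditional} on this finiteness; it is not a consequence of klt-ness. Second, even granting that, your Hochschild--Serre/Clifford-theory reduction terminates in a ``projective analog'' of Theorem \ref{thmazk} for $PGL_e$-representations which you acknowledge is the main obstacle and do not prove. So the proposal as written does not close.

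The paper transfers rigidity in the opposite direction, and much more cheaply. Since $\Gamma$ acts fixed point freely in codimension one, $\pi_1(X_{reg})\cong\Gamma$ directly (no comparison with $\pi_1(X)$ is needed), and by Selberg's lemma $\Gamma$ contains a torsion-free normal subgroup $\Gamma'$ of finite index, giving a smooth projective Galois quasi-\'etale cover $Y=\mathcal{D}/\Gamma'\to X$ with $\pi_1(Y)=\Gamma'$. Klingler's smooth result gives $H^0(Y,\mathrm{Sym}^k\Omega^1_Y)=0$ for $k<m_{\mathcal{D}}$, hence every $\rho\colon\pi_1(Y)\to GL(r,\mathbb{C})$ is rigid for $r\le m_{\mathcal{D}}-1$ by Proposition \ref{propa}; then Lemma \ref{lemg} (if $G$ has infinitely many non-isomorphic semisimple representations in $GL(n,\mathbb{C})$, so does any finite-index subgroup --- proved via induced representations, with no group cohomology) promotes this to the overgroup $\Gamma=\pi_1(X_{reg})$. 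If you want to salvage your own route, the piece to supply is an analog of Lemma \ref{lemg} for finite-index \emph{subgroups} passing to overgroups; the paper's lemma already is exactly that, so the correct fix is simply to replace your quotient-by-$K$ argument with the finite-index inclusion $\Gamma'\le\Gamma$.
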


The proof relies on the observation that non-rigidity of finite dimensional representations of a discrete group is preserved under passing to a finite index subgroup (Lemma \ref{lemg}).

We use this to extend \cite[Theorem 1.3(i)]{klingler} to a slightly larger class of lattices in $SU(n,1)$ and obtain the following.

\begin{proposition}\label{pkott}
    Let $\Gamma\subset SU(n,1)$ be a cocompact lattice acting fixed point freely in codimension one on $\mathbb{B}^n$, and suppose $\Gamma$ contains a Kottwitz lattice of finite index. \\
    If $n+1$ is prime, then any representation $\rho:\Gamma\to GL(n-1,\mathbb{C})$ is rigid.
\end{proposition}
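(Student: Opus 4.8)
The plan is to reduce the rigidity statement for $\Gamma$ to the already known rigidity statement for the Kottwitz subgroup, and then transfer it up to $\Gamma$ using the fact that non-rigidity descends to finite-index subgroups. Let $\Gamma_0\subset\Gamma$ denote the Kottwitz lattice of finite index supplied by the hypothesis. Since a Kottwitz lattice is torsion-free and $\Gamma$ is cocompact, $\Gamma_0$ is a torsion-free cocompact lattice in $SU(n,1)$, so the quotient $X_0=\mathbb{B}^n/\Gamma_0$ is a smooth compact ball quotient, hence a smooth projective variety with $\pi_1(X_0)\cong\Gamma_0$.

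First I would invoke the arithmetic vanishing theorem for Kottwitz lattices: because $n\ge 2$ and $n+1$ is prime, \cite[Theorem 1.11]{klingler} gives $H^0(X_0,\mathrm{Sym}^i\Omega^1_{X_0})=0$ for all $1\le i\le n-1$. Feeding this into Theorem \ref{thmazk}(1) (in its smooth projective incarnation, which is Arapura's original result, and which applies since $X_0$ is smooth projective and in particular klt), I conclude that every representation $\rho_0:\Gamma_0=\pi_1(X_0)\to GL(n-1,\mathbb{C})$ is rigid; equivalently, the character variety $\mathrm{Hom}(\Gamma_0,GL(n-1,\mathbb{C}))//GL(n-1,\mathbb{C})$ is zero-dimensional.

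Next I would transfer this to $\Gamma$ by contraposition. Suppose some representation $\rho:\Gamma\to GL(n-1,\mathbb{C})$ were non-rigid, i.e. it admitted a non-trivial deformation. By Lemma \ref{lemg}, non-rigidity is preserved under restriction to the finite-index subgroup $\Gamma_0$, so $\rho|_{\Gamma_0}$ would be a non-rigid representation of $\Gamma_0$ into $GL(n-1,\mathbb{C})$, contradicting the previous paragraph. Hence every $\rho:\Gamma\to GL(n-1,\mathbb{C})$ is rigid, which is the assertion. Note that the remaining hypotheses on $\Gamma$ (cocompactness, fixed-point-freeness in codimension one) are used only to guarantee that the ambient quotient $X=\mathbb{B}^n/\Gamma$ sits in the klt framework of the paper; the rigidity argument itself proceeds purely at the level of the groups $\Gamma_0\subset\Gamma$.

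The main obstacle is the descent step encapsulated in Lemma \ref{lemg}: a priori a genuinely varying family of $\Gamma$-representations could collapse, up to conjugacy, after restriction to $\Gamma_0$, and one must rule this out. The expected mechanism is that the restriction morphism on character varieties $\mathrm{Hom}(\Gamma,GL(n-1,\mathbb{C}))//GL(n-1,\mathbb{C})\to\mathrm{Hom}(\Gamma_0,GL(n-1,\mathbb{C}))//GL(n-1,\mathbb{C})$ has finite fibres, because a representation of $\Gamma$ extending a fixed $\rho_0$ on $\Gamma_0$ is pinned down, up to finitely many choices, by the images of coset representatives constrained by the finite quotient $\Gamma/\Gamma_0$. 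Given this finiteness, a zero-dimensional target forces a zero-dimensional source, which is exactly the rigidity transfer needed; everything else is an assembly of the arithmetic input of \cite[Theorem 1.11]{klingler} and Arapura's theorem.
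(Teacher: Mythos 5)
Your proposal is correct and follows essentially the same route as the paper: rigidity for the torsion-free Kottwitz sublattice $\Gamma_0$ via Klingler's vanishing theorem on the smooth quotient $\mathbb{B}^n/\Gamma_0$ combined with Arapura's theorem, then transfer to $\Gamma$ by Lemma \ref{lemg} (the paper phrases this via $\Gamma=\pi_1(X_{reg})$, but as you note the transfer is purely group-theoretic). Your description of why the restriction map has finite fibres matches the paper's induced-representation argument in the proof of Lemma \ref{lemg}.
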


The methods employed in the proof of \cite[Theorem 1.16]{klingler} can in principle be extended to quotients of reducible bounded symmetric domains. However, we observe that in the case of polydisk quotients, the results of Mok unfortunately cannot be used to say anything about the (non)existence of global reflexive symmetric differentials.

\section*{Acknowledgements}

I am grateful to Daniel Greb, Georg Hein, and 
Jochen Heinloth for fruitful discussions. I would also like to thank Manuel Hoff for discussions leading to the proof of Lemma \ref{lemg}. I am also grateful for Ya Deng's comments on the first version of the paper which improved it.

\section{Vanishing of reflexive symmetric differentials}

We first discuss notations and concepts involved in the proof of \cite[Theorem 1.16]{klingler}, adapted to the singular setting. The automorphism group of $\mathcal{D}$ is denoted by $G_0$, and we can write $\mathcal{D}=G_0/K_0$, where $K_0$ is a maximal compact subgroup of $G_0$, unique up to conjugation. There is an embedding $\mathcal{D}=G_0/K_0\subset G/Q$, where $G$ is a complexification of $G_0$, $Q$ is a parabolic subgroup of $G$, and $G/Q=:\mathcal{D}^\vee$ is the compact dual of $\mathcal{D}$. The complexification $K$ of $K_0$ is a Levi subgroup of $Q$. We denote by $\mathfrak{g}_0$, $\mathfrak{k}_0$, $\mathfrak{g}$, $\mathfrak{k}$, and $\mathfrak{q}$ the Lie algebras of $G_0$, $K_0$, $G$, $K$, and $Q$ respectively.\\
\\
The Lie algebra $\mathfrak{g}$ has a Hodge decomposition given by $\mathfrak{g}=\mathfrak{g}^{-1,1}\oplus\mathfrak{g}^{0,0}\oplus\mathfrak{g}^{1,-1}$, where $\mathfrak{g}^{0,0}=\mathfrak{k}$, and $\mathfrak{q}=\mathfrak{g}^{0,0}\oplus\mathfrak{g}^{-1,1}$. Let $\mathfrak{h}\subset\mathfrak{k}_0$ be a fixed Cartan subalgebra, and note that $\mathfrak{h}_{\mathbb{C}}:=\mathfrak{h}\otimes_{\mathbb{R}}\mathbb{C}$ is a Cartan subalgebra of $\mathfrak{k}$, as well as of $\mathfrak{g}$. Let $\mathfrak{h}^*_{\mathbb{R}}$ denote the space of all real valued linear forms on $\mathfrak{h}_{\mathbb{C}}$, and let $C$ be a fixed positive Weyl chamber in $\mathfrak{h}^*_{\mathbb{R}}$. Denote by $\langle\cdot,\cdot\rangle$ the inner product on $\mathfrak{h}^*_{\mathbb{R}}$, and let $\mu\in C$ be the highest root of $\mathfrak{g}$. Denote by $\Gamma\subset G_0$ a discrete group of automorphisms acting without fixed points in codimension one on $\mathcal{D}$, and let $X=\mathcal{D}/\Gamma$ be the corresponding quotient, which is a normal, quasi-projective variety. \\
\\
Let $V$ be a holomorphic vector bundle on the smooth locus $X_{reg}$ of $X$, equipped with a Hermitian metric $h$. Then the pair $(V,h)$ is called a \emph{Hermitian holomorphic vector bundle} on $X_{reg}$. For background and definitions of curvature, (semi-)negativity/positivity in the sense of Griffiths, and proper semi-negativity/positivity of Hermitian vector bundles, we refer the reader to \cite[Section 2]{klingler} and \cite[Chapter 10]{mok}.\\
\\
Let $\mathcal{D}$ be an irreducible bounded symmetric domain as usual, and let $\rho:K_0\to GL(V)$ be a complex finite dimensional representation of $K_0$, so equivalently $V$ is a $K$-module. Since $K$ is a Levi quotient of $Q$, $V$ can also be considered as a finite dimensional $Q$-module. We can associate to $\rho$ a $G$-equivariant vector bundle $G\times_{Q,\rho}V$ on the compact dual $\mathcal{D}^\vee=G/Q$. Its restriction $G\times_{K_0,\rho}V$ to $\mathcal{D}$ is a $G_0$-equivariant holomorphic vector bundle on $\mathcal{D}$. We can endow $G_0\times_{K_0,\rho}V$ with a $G$-equivariant Hermitan metric coming from a $K$-invariant Hermitian metric on $V$.  

\begin{definition}
    An \emph{automorphic bundle} $\mathcal{V}$ on the smooth locus $X_{reg}$ of $X=\mathcal{D}/\Gamma$ is a Hermitian holomorphic bundle on $X_{reg}$ of the form $\mathcal{V}=G_0/\Gamma\times_{K_0,\rho}V$ on, where $\rho:K_0\to GL(V)$ is a complex finite dimensional representation of $K_0$. We say that $\mathcal{V}$ is irreducible if the representation $\rho$ is irreducible.
\end{definition}

Properly semi-negative irreducible automorphic vector bundles have been characterized in terms of highest weight theory by Mok.  

\begin{proposition}[{\cite[Proposition 2, p.204]{mok}}]\label{prop0}
    Let $(\mathcal{V},s)$ be a homogeneous Hermitian vector bundle. Denote by $\mu$ the highest root of $\mathfrak{g}$, and by $\omega$ the lowest weight of $\rho:K_0\to GL(V)$. Then $(\mathcal{V},s)$ is properly seminegative if and only if $\langle\omega,\mu\rangle=0$. 
\end{proposition}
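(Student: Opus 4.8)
The plan is to compute the Griffiths curvature of the canonical homogeneous metric $s$ on $\mathcal V=G_0\times_{K_0,\rho}V$ at the base point $o\in\mathcal D$ from the Lie-algebra data, and then to read off (semi)negativity purely in terms of the weights of $\rho$. Writing $\mathfrak p^\pm=\mathfrak g^{\pm1,\mp1}$, the Hodge decomposition identifies the holomorphic tangent space $T_o\mathcal D$ with $\mathfrak p^+$, and I would invoke the standard curvature formula for homogeneous Hermitian bundles: as a Hermitian form $\Theta$ on $\mathfrak p^+\otimes V$, the curvature is
$$
\Theta(X\otimes v)=-\big\langle\rho\big([X,\overline X]\big)\,v,\,v\big\rangle,
\qquad X\in\mathfrak p^+,\ v\in V,
$$
where $[X,\overline X]\in\mathfrak h_{\mathbb C}\subset\mathfrak k$ acts on $V$ through the differential of $\rho$. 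Seminegativity in the sense of Griffiths is exactly the statement that $\Theta\le0$ for all $X$ and $v$.

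The second step is to reduce to a single extremal tangent direction. Since $s$ is $G_0$-invariant, the form $\Theta$ is $K_0$-invariant on $\mathfrak p^+\otimes V$; by the strongly orthogonal (polar) normal form, every $X\in\mathfrak p^+$ is $K_0$-conjugate to $\sum_i a_i\,e_{\gamma_i}$ with $a_i\ge0$, where $\gamma_1<\dots<\gamma_r=\mu$ are the strongly orthogonal noncompact positive roots. Strong orthogonality gives $[e_{\gamma_i},\overline{e_{\gamma_j}}]=\delta_{ij}H_{\gamma_i}$, so the curvature decouples as $\Theta(X\otimes v)=\sum_i a_i^2\,\Theta(e_{\gamma_i}\otimes v)$. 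Moreover each $e_{\gamma_i}$ is a rank-one (characteristic) vector of the same length, and $K_0$ acts transitively on such vectors; hence, after translating $v$ by a suitable element of $K_0$, the form $\Theta(e_{\gamma_i}\otimes v)$ runs through the same values as $\Theta(e_\mu\otimes v)$ as $v$ ranges over $V$. Therefore $\mathcal V$ is seminegative if and only if $\Theta(e_\mu\otimes v)\le0$ for all $v$.

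The third step is the weight computation in the direction $e_\mu$. Decomposing $V=\bigoplus_\nu V_\nu$ into $\mathfrak h_{\mathbb C}$-weight spaces, which are orthogonal for the $K_0$-invariant metric, $\Theta(e_\mu\otimes v)$ is a negative multiple of $\sum_\nu\langle\nu,\mu\rangle\,|v_\nu|^2$, so it is $\le0$ for all $v$ exactly when $\langle\nu,\mu\rangle\ge0$ for every weight $\nu$ of $V$. Since $\mu$ is dominant and every weight $\nu$ differs from the lowest weight $\omega$ by a non-negative integer combination of positive roots of $\mathfrak k$, each of which pairs non-negatively with $\mu$, the minimum of $\langle\nu,\mu\rangle$ over the weights of $V$ is attained at $\nu=\omega$. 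Hence $\mathcal V$ is seminegative if and only if $\langle\omega,\mu\rangle\ge0$, strictly negative if and only if $\langle\omega,\mu\rangle>0$, and seminegative with a nontrivial curvature kernel — i.e. properly seminegative, with $e_\mu\otimes v_\omega$ in the kernel — precisely when $\langle\omega,\mu\rangle=0$. As a sanity check, for $V=\mathfrak p^+$ (the tangent bundle) one has $\omega=\gamma_1$, so $\langle\gamma_1,\mu\rangle=0$ in rank $\ge2$ (properly seminegative) while $\langle\mu,\mu\rangle>0$ in rank $1$ (strictly negative).

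The weight bookkeeping is then forced; the real work lies in the two structural inputs. First, one must establish the homogeneous curvature identity with the correct sign and normalization, since the entire trichotomy ($>0$, $=0$, $<0$) depends on it. Second, one must justify the reduction to the characteristic direction $e_\mu$, combining $K_0$-invariance of $\Theta$, the strongly orthogonal polar form on $\mathfrak p^+$, and transitivity of $K_0$ on rank-one vectors. I expect the main obstacle to be the clean matching of the curvature kernel found above with Mok's precise definition of proper seminegativity, so that the boundary case $\langle\omega,\mu\rangle=0$ corresponds to it exactly; this is where the geometry of the characteristic bundle of $\mathcal D$ genuinely enters.
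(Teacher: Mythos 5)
The paper offers no proof of this statement: it is imported verbatim as \cite[Proposition 2, p.204]{mok}, so there is nothing internal to compare your argument against. What you have written is, in outline, a faithful reconstruction of Mok's own argument: the curvature of $G_0\times_{K_0,\rho}V$ at the base point reduces to $-\langle\rho([X,\overline X])v,v\rangle$ because $[\mathfrak p^+,\mathfrak p^+]=0$ for Hermitian symmetric spaces, the polar decomposition along strongly orthogonal roots decouples the curvature, $K_0$-transitivity on characteristic vectors reduces everything to the single direction $e_\mu$, and the weight monotonicity $\langle\nu,\mu\rangle\ge\langle\omega,\mu\rangle$ (valid since $\mu$ is dominant and $\nu-\omega$ is a nonnegative sum of compact positive roots) localizes the extremal value at the lowest weight. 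Your sanity check on $V=\mathfrak p^+$ correctly reproduces the rank dichotomy, although $\omega$ there is the lowest noncompact positive root rather than literally one of the strongly orthogonal roots $\gamma_i$.

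The two gaps you flag are exactly where the content lies, and one of them deserves a sharper warning. First, the identification of ``properly seminegative'' with ``seminegative with nontrivial curvature kernel'' must be checked against Mok's definition; it is the right reading (it is what makes the companion vanishing theorem, Proposition \ref{mokp}, exclude the strictly negative rank-one case, where $H^0(X,\Omega^1_X)$ can be nonzero), but with the alternative nondegeneracy reading your equivalence would fail, so this is not a cosmetic point. Second, the sign bookkeeping is genuinely treacherous here: the paper models $\mathcal T_X$ on $\mathfrak g^{-1,1}$ and later quotes Mok's Proposition 1 with the lowest weight $\tau$ satisfying $\langle\tau,\mu\rangle=-2$ for the disk, whereas your computation tacitly models the tangent space on $\mathfrak p^+$ with weights the noncompact positive roots; these conventions are not interchangeable (the lowest weight of $\mathfrak g^{-1,1}$ is $-\mu$, which pairs strictly negatively with $\mu$ in every rank), so you must fix one convention for the Hodge piece, the positive Weyl chamber, and the meaning of ``lowest weight'' and carry it through consistently, or the criterion $\langle\omega,\mu\rangle=0$ will come out wrong. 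Also note that irreducibility of $\mathcal D$ (for transitivity on characteristic vectors) and of $\rho$ (for ``the'' lowest weight) are used silently and should be stated, and that $[X,\overline X]$ lies in $\mathfrak k_0$ rather than in the Cartan subalgebra for general $X$, though this is harmless since you only evaluate on root vectors after the polar reduction.
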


The following vanishing result by Mok is an important ingredient in the proof of Theorem \ref{thmk} in the smooth projective case.

\begin{proposition}[{\cite[Corollary 1, p.211]{mok}}]\label{mokp}
    Suppose $(\mathcal{V},s)$ is an irreducible non-trivial locally homogeneous vector bundle on $X=\mathcal{D}/\Gamma$. If $(\mathcal{V},s)$ is of properly seminegative curvature in the sense of Griffiths, then $H^0(X,\mathcal{V}^\vee)=0$.
\end{proposition}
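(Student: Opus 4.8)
The plan is a Bochner-type vanishing argument on the compact quotient $X$, exploiting that an automorphic bundle carries a canonical connection whose curvature is \emph{parallel} and is computed algebraically from $\rho$. Since the statement is only needed on a compact quotient, and Selberg's lemma supplies a torsion-free finite-index subgroup $\Gamma'\subset\Gamma$, I would first pass to the finite cover $\mathcal{D}/\Gamma'$: a nonzero section of $\mathcal{V}^\vee$ on $X$ pulls back to a nonzero section there, and both local homogeneity and proper seminegativity are preserved, so it suffices to treat the smooth compact case. Let $\sigma\in H^0(X,\mathcal{V}^\vee)$ and set $\varphi=\|\sigma\|^2$ for the induced dual metric $s^\vee$.

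For a holomorphic section the standard identity reads $i\partial\bar\partial\varphi = i\,s^\vee(D'\sigma,D'\sigma) - s^\vee(i\Theta(\mathcal{V}^\vee)\sigma,\sigma)$, where $D'$ is the $(1,0)$-part of the Chern connection. The target is to deduce from compactness that $\varphi$ is constant, hence that $\sigma$ is parallel ($D'\sigma=0$) and pointwise annihilated by the curvature, i.e. takes values in the null directions of $\Theta(\mathcal{V})$. A parallel section of a locally homogeneous bundle on a symmetric space is invariant under the canonical connection, hence corresponds to a $K_0$-fixed vector in $V^\vee$; since $\rho$, and therefore $\rho^\vee$, is irreducible and nontrivial there is no nonzero such vector, forcing $\sigma\equiv 0$ and $H^0(X,\mathcal{V}^\vee)=0$.

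The crux, and the reason the properly-seminegative hypothesis is indispensable, is the passage to ``$\varphi$ constant''. Griffiths seminegativity of $(\mathcal{V},s)$ only makes $\mathcal{V}^\vee$ Griffiths \emph{semipositive}, and then the two terms of the Bochner identity carry competing signs, so $\varphi$ is not visibly plurisubharmonic and the maximum principle does not apply off the shelf; integrating merely yields $\int_X\|D'\sigma\|^2 = \int_X s^\vee(i\Theta(\mathcal{V}^\vee)\sigma,\sigma)$, with no reason for either side to vanish. Closing this gap is exactly where one must use that $\Theta(\mathcal{V})$ is the parallel canonical curvature together with the characterization in Proposition \ref{prop0}: the vanishing $\langle\omega,\mu\rangle=0$ of the lowest weight identifies the null cone of the curvature with a proper, parallel sub-structure, and it is the rigidity of this degenerate locus that pins $\sigma$ into the null directions and upgrades it to a genuinely parallel section. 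I expect this null-direction analysis to be the main obstacle; by contrast the reduction to the smooth cover and the final representation-theoretic step are routine.
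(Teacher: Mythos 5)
First, note that the paper does not actually prove this statement: Proposition \ref{mokp} is quoted verbatim from Mok (\cite[Corollary 1, p.211]{mok}) and used as a black box, so there is no internal proof to compare against. Your proposal therefore has to stand on its own as a proof of Mok's theorem, and it does not. The framing is reasonable: the reduction to a smooth compact quotient via Selberg's lemma is harmless, and the endgame (a section parallel for the canonical connection corresponds to a $K_0$-fixed vector in $V^\vee$, which cannot exist when $\rho$ is irreducible and nontrivial) is correct. But the entire mathematical content of the theorem sits in the step you explicitly leave open: passing from the Bochner identity, whose two terms carry competing signs because $\mathcal{V}^\vee$ is Griffiths \emph{semipositive}, to the conclusion that $D'\sigma=0$. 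The integrated identity $\int_X\|D'\sigma\|^2=\int_X\langle\mathrm{tr}_\omega\, i\Theta(\mathcal{V}^\vee)\sigma,\sigma\rangle$ has both sides nonnegative, and nothing you write forces either side to vanish.

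Your gesture toward closing the gap --- that Proposition \ref{prop0} identifies a null cone of the parallel curvature and that ``the rigidity of this degenerate locus pins $\sigma$ into the null directions'' --- is a description of what must be true, not an argument for it. Knowing that $\langle\omega,\mu\rangle=0$, i.e.\ that the curvature degenerates along characteristic directions, does not by itself explain why a holomorphic section of the \emph{semipositive} dual must be annihilated by the curvature or be parallel; a priori the section could live where the curvature is strictly positive, and the pointwise Bochner identity is perfectly consistent with that. Mok's actual argument requires his integral formula over the characteristic bundle (the machinery of Hermitian metric rigidity): it is that device which converts the degeneracy of the curvature along characteristic directions into the vanishing of $D'\sigma$ in those directions, and then uses that the characteristic directions generate the tangent space when $\mathrm{rank}(\mathcal{D})\ge 2$ to conclude that $\sigma$ is parallel everywhere. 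Since you supply no substitute for that step and yourself flag it as ``the main obstacle,'' the proposal is an outline of the correct strategy with a genuine gap at its core.
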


Note that $(\mathcal{V},h)$ being semi-negative is equivalent to the dual $(\mathcal{V}^\vee,h^\vee)$ being semi-positive. The tangent bundle of a smooth, quasi-projective quotient $X$ of $\mathcal{D}$ is shown to be properly semi-negative when $\textrm{rank}(\mathcal{D})\ge2$ in \cite[p. 204]{mok}. The following result due to Klingler then completes the proof of Theorem \ref{thmk} in the smooth, projective setting.

\begin{theorem}[{\cite[Theorem 2.7]{klingler}}]\label{thmbk}
    Let $\mathcal{D}$ be a classical irreducible bounded symmetric domain, $\Gamma\subset\mathrm{Aut}^0(\mathcal{D})$ a torsion-free lattice, and $X=\mathcal{D}/\Gamma$ the corresponding quotient. Then the automorphic bundle $\mathrm{Sym}^i\mathcal{T}_X$ is a direct sum of properly seminegative irreducible automorphic bundles, for $0<i< m_{\mathcal{D}}$.
\end{theorem}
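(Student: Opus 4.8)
The plan is to realize $\mathrm{Sym}^i\mathcal{T}_X$ as the automorphic bundle attached to an explicit $K_0$-module, decompose that module into irreducibles by classical plethysm, and apply Mok's criterion (Proposition \ref{prop0}) summand by summand. Since $\mathcal{D}=G_0/K_0$, the holomorphic tangent space at the base point is $\mathfrak{g}^{1,-1}\cong\mathfrak{g}/\mathfrak{q}$, an irreducible $K_0$-module (because $\mathcal{D}$ is irreducible) whose highest weight is precisely the highest root $\mu$ of $\mathfrak{g}$, as $\mathfrak{g}^{1,-1}$ contains the highest root vector. Thus $\mathcal{T}_X$ is the irreducible automorphic bundle associated to $V=\mathfrak{g}^{1,-1}$, and $\mathrm{Sym}^i\mathcal{T}_X$ is the automorphic bundle associated to the $K_0$-module $\mathrm{Sym}^iV$. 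It therefore suffices to decompose $\mathrm{Sym}^iV$ into irreducible $K$-submodules and to check, for each, that the associated bundle is properly seminegative.

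First I would write down the isotropy representation $V$ in each classical case: $V=\mathbb{C}^p\otimes(\mathbb{C}^q)^*$ for $\mathcal{D}^I_{p,q}$ (with $K=S(GL_p\times GL_q)$), $V=\wedge^2\mathbb{C}^n$ for $\mathcal{D}^{II}_n$ and $V=\mathrm{Sym}^2\mathbb{C}^n$ for $\mathcal{D}^{III}_n$ (with $K=GL_n$), and $V=\mathbb{C}^n$ tensored with a character for $\mathcal{D}^{IV}_n$ (with $K=SO(2)\times SO(n)$). The decomposition of $\mathrm{Sym}^iV$ is then given by standard plethysm formulas from \cite{weyman}: the Cauchy formula $\mathrm{Sym}^i(\mathbb{C}^p\otimes(\mathbb{C}^q)^*)=\bigoplus_{\lambda\vdash i}S_\lambda\mathbb{C}^p\otimes(S_\lambda\mathbb{C}^q)^*$ in type I, the expansions of $\mathrm{Sym}^i(\wedge^2\mathbb{C}^n)$ and $\mathrm{Sym}^i(\mathrm{Sym}^2\mathbb{C}^n)$ over partitions with even columns, respectively even rows, in types II and III, and the harmonic (trace) decomposition $\mathrm{Sym}^i\mathbb{C}^n=\bigoplus_{j}\mathcal{H}^{i-2j}$ in type IV. Each summand is an irreducible $K$-module, hence yields an irreducible automorphic bundle, and collecting them realizes $\mathrm{Sym}^i\mathcal{T}_X$ as a direct sum of irreducible automorphic bundles.

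By Proposition \ref{prop0}, the summand $\mathcal{V}_\lambda$ attached to an irreducible $K$-module with lowest weight $\omega_\lambda$ is properly seminegative exactly when $\langle\omega_\lambda,\mu\rangle=0$. The computation of $\omega_\lambda$ is routine: it is the image of the highest weight under the longest element $w_0^K$ of the Weyl group of $K$, which in the $GL$-cases simply reverses the coordinates of the defining partition. Writing $\mu$ in the standard coordinates of each root system ($\mu=e_1-e_{p+q}$ in type $A_{p+q-1}$, $\mu=e_1+e_2$ in type $D_n$, $\mu=2e_1$ in type $C_n$, and $\mu=e_1+e_2$ for type IV) and using the diagonal restriction of the Killing form, the pairing $\langle\omega_\lambda,\mu\rangle$ reduces to a simple combinatorial quantity attached to $\lambda$, essentially a bound on the number of parts or on the shape of $\lambda$. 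One then checks that this quantity vanishes for every summand occurring in $\mathrm{Sym}^iV$ as long as $i<m_{\mathcal{D}}$: for instance, in type I the pairing equals $\lambda_p+\lambda_q$, which forces $\ell(\lambda)<\min(p,q)$, so the first possible violation occurs at $\lambda=(1^{\min(p,q)})$, i.e. at $i=\min(p,q)=m_{\mathcal{D}}$; the other types are handled identically, the relevant threshold being dictated by the smallest symmetric power in which a partition violating the orthogonality condition can occur.

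The main obstacle is precisely this uniform, case-by-case weight bookkeeping. The delicate points are: fixing the correct conventions for $\mathfrak{g}^{1,-1}$ versus $\mathfrak{g}^{-1,1}$ and for the notion of lowest weight, so that the pairing with $\mu$ is computed with the right sign; correctly reading off the highest root and the restriction of the Killing form to $\mathfrak{h}^*_{\mathbb{R}}$ in each of the root systems $A$, $B$, $C$, $D$; and verifying that every summand appearing for $i\ge 1$ is nontrivial, so that the subsequent application of Proposition \ref{mokp} to the dual is legitimate. Once the pairing has been reduced to a combinatorial condition on partitions, matching the resulting bound to the value of $m_{\mathcal{D}}$ in each type is a finite check, and the plethysm formulas of \cite{weyman} guarantee that no summand is overlooked.
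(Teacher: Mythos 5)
Your proposal follows exactly the strategy that the paper attributes to Klingler's proof of this theorem (the paper itself only cites \cite[Theorem 2.7]{klingler} and sketches the method): realize $\mathrm{Sym}^i\mathcal{T}_X$ as the automorphic bundle attached to the $i$-th symmetric power of the isotropy representation, decompose it into irreducible $K$-modules via the plethysm formulas of \cite{weyman}, and test the lowest weight of each summand against the highest root using Proposition \ref{prop0}. This is essentially the same approach, and the case-by-case thresholds you describe are consistent with the stated values of $m_{\mathcal{D}}$.
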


The bundle $\textrm{Sym}^i\mathcal{T}_X$ is the automorphic bundle associated to the $K$-module $\textrm{Sym}^i\mathfrak{g}^{-1,1}$. The proof of Theorem \ref{thmbk} consists of computing the lowest weight of all irreducible $K$-modules which are direct summands of $\textrm{Sym}^i\mathfrak{g}^{-1,1}$, and then applying Proposition \ref{prop0}. Klingler's result \cite[Theorem 1.16]{klingler} then follows by applying Proposition \ref{mokp} to $\mathcal{V}=\mathrm{Sym}^i\mathcal{T}_X$.\\
\\
Using Selberg's lemma and the fact that the automorphism group of each classical irreducible bounded symmetric domain has finitely many connected components, we can extend \cite[Theorem 1.16]{klingler} to the normal setting. More precisely, we have

\begin{theorem}[=Theorem \ref{thmk}]\label{thmk2}
Let $\mathcal{D}$ be a classical irreducible bounded symmetric domain. Let $\Gamma\subset\mathrm{Aut}(\mathcal{D})$ be a cocompact lattice whose action on $\mathcal{D}$ is fixed point free in codimension 1, and let $X=\mathcal{D}/\Gamma$ be the corresponding quotient. Then $H^0(X,\mathrm{Sym}^{[i]}\Omega^1_X)=0$ for $1\le i<m_{\mathcal{D}}$, where
\begin{itemize}
    \item $m_{\mathcal{D}}=\textrm{inf}(p,q)$ if $\mathcal{D}=\mathcal{D}^I_{p,q}=SU(p,q)/S(U(p)\times U(q))$.
    \item $m_{\mathcal{D}}=[n/2]$ if $\mathcal{D}=\mathcal{D}^{II}_n=SO^*(2n)/U(n)$.
    \item $m_{\mathcal{D}}=[n/2]$ if $\mathcal{D}=\mathcal{D}^{III}_n=Sp(2n,\mathbb{R})/U(n)$.
    \item $m_{\mathcal{D}}=2$ if $\mathcal{D}=\mathcal{D}^{IV}_n=SO_0(2,n)/SO(2)\times SO(n)$.
\end{itemize}
\end{theorem}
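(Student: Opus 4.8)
The plan is to deduce the klt statement from the smooth projective case by passing to a finite torsion-free cover and comparing global (reflexive) symmetric differentials through their descriptions as invariant differentials on the universal cover $\mathcal{D}$.

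First I would extract a suitable finite-index subgroup. Since $\mathrm{Aut}(\mathcal{D})$ has finitely many connected components, $\Gamma_0:=\Gamma\cap\mathrm{Aut}^0(\mathcal{D})$ has finite index in $\Gamma$. The group $\Gamma_0$ is a finitely generated linear group, so Selberg's lemma (\cite{alperin}) furnishes a torsion-free finite-index subgroup $\Gamma'\subseteq\Gamma_0$; then $\Gamma'\subseteq\mathrm{Aut}^0(\mathcal{D})$ still has finite index in $\Gamma$. As a finite-index subgroup of the cocompact lattice $\Gamma$, the group $\Gamma'$ is again a cocompact lattice, and since it is torsion-free it acts freely on $\mathcal{D}$ (any element fixing a point of $\mathcal{D}$ is conjugate into the compact group $K_0$, hence has finite order). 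Therefore $X':=\mathcal{D}/\Gamma'$ is a smooth projective variety (its canonical bundle being ample, so projectivity follows from Kodaira).

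Next, by Theorem \ref{thmbk} the automorphic bundle $\mathrm{Sym}^i\mathcal{T}_{X'}$ is a direct sum of properly seminegative irreducible automorphic bundles for $0<i<m_{\mathcal{D}}$; applying Proposition \ref{mokp} to each summand $\mathcal{V}$ and using $\mathrm{Sym}^i\Omega^1_{X'}=(\mathrm{Sym}^i\mathcal{T}_{X'})^\vee=\bigoplus\mathcal{V}^\vee$ yields
\[
H^0(X',\mathrm{Sym}^i\Omega^1_{X'})=0\qquad\text{for }1\le i<m_{\mathcal{D}}.
\]
It remains to transfer this vanishing to $X$. Let $\mathcal{D}^\circ\subseteq\mathcal{D}$ be the locus on which $\Gamma$ acts freely; by hypothesis $\mathcal{D}\setminus\mathcal{D}^\circ$ has codimension $\ge2$. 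The étale quotient $\mathcal{D}^\circ\to\mathcal{D}^\circ/\Gamma=:U$ identifies $U$ with a big open subset of $X_{reg}$, so that using Hartogs extension on the smooth $\mathcal{D}$ and on $X_{reg}$ together with étale descent of symmetric differentials one obtains
\[
H^0(X,\mathrm{Sym}^{[i]}\Omega^1_X)=H^0(X_{reg},\mathrm{Sym}^i\Omega^1_{X_{reg}})=H^0(\mathcal{D},\mathrm{Sym}^i\Omega^1_{\mathcal{D}})^{\Gamma}.
\]
Since $\Gamma'$ acts freely on all of $\mathcal{D}$, the same reasoning gives $H^0(X',\mathrm{Sym}^i\Omega^1_{X'})=H^0(\mathcal{D},\mathrm{Sym}^i\Omega^1_{\mathcal{D}})^{\Gamma'}$. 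As $\Gamma'\subseteq\Gamma$, the $\Gamma$-invariants lie inside the $\Gamma'$-invariants, and therefore for $1\le i<m_{\mathcal{D}}$,
\[
H^0(X,\mathrm{Sym}^{[i]}\Omega^1_X)=H^0(\mathcal{D},\mathrm{Sym}^i\Omega^1_{\mathcal{D}})^{\Gamma}\subseteq H^0(\mathcal{D},\mathrm{Sym}^i\Omega^1_{\mathcal{D}})^{\Gamma'}=H^0(X',\mathrm{Sym}^i\Omega^1_{X'})=0.
\]

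I expect the main obstacle to be the identification in the last step, namely that global reflexive symmetric differentials on the singular quotient $X$ coincide with the $\Gamma$-invariant symmetric differentials on $\mathcal{D}$. This is exactly where the assumption that $\Gamma$ acts fixed-point-freely in codimension one is indispensable: it ensures that $\mathcal{D}\to X$ is étale in codimension one, so the comparison of sheaves is valid over the big open set $U\subseteq X_{reg}$ and then extends over the codimension-$\ge2$ complement by reflexivity (equivalently, by Hartogs extension). Once this identification is in place the conclusion is immediate, since invariants under a larger group sit inside invariants under a smaller one; this is why no genuinely new geometry beyond Klingler's smooth computation is needed.
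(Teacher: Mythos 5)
Your proposal is correct and follows essentially the same route as the paper: both reduce to Klingler's smooth projective case by intersecting $\Gamma$ with $\mathrm{Aut}^0(\mathcal{D})$, applying Selberg's lemma to obtain a torsion-free finite-index sublattice $\Gamma'$, and then transferring the vanishing from the smooth quotient back to $X$. The only cosmetic difference is that you phrase the transfer as an inclusion of $\Gamma$-invariant symmetric differentials on $\mathcal{D}$ into $\Gamma'$-invariants, whereas the paper uses the reflexive pullback isomorphism $\gamma^{[*]}\mathrm{Sym}^{[i]}\Omega^1_X\cong\mathrm{Sym}^i\Omega^1_Y$ along the finite quasi-\'etale cover $Y\to X$; these are the same argument.
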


\begin{proof}
The automorphism groups of $\mathcal{D}^I_{p,p}$ and $\mathcal{D}^{IV}_n$ have two connected components, while those of $\mathcal{D}^I_{p,q}$ ($p\neq q$), $\mathcal{D}^{II}_n$ and $\mathcal{D}^{III}_n$ are connected. We treat the cases when $\textrm{Aut}(\mathcal{D})$ is connected separately from when it is disconnected.

Suppose $X$ is a quotient of $\mathcal{D}^I_{p,p}$. Then up to a $2:1$ quasi-etale cover, we can write $\mathcal{T}_{X_{reg}}\cong P\times_K\mathfrak{g}^{-1,1}$, where $K=S(GL(p,\mathbb{C})\times GL(p,\mathbb{C}))$. By Selberg's lemma (\cite{alperin}), the group $\Gamma$ admits a normal, torsion-free subgroup $\Gamma'$ of finite index. Thus the quotient map $\mathcal{D}^I_{p,p}\to X$ factors as $\mathcal{D}^I_{p,p}\to Y\to X$, where $Y=\mathcal{D}^I_{p,p}/\Gamma'$ is smooth and projective, and the map $\gamma:Y\to X$ is finite, quasi-\'etale, and Galois. Note that $\gamma^{[*]}(\textrm{Sym}^{[i]}\Omega^1_X)\cong\textrm{Sym}^i\Omega^1_Y$. From \cite[Theorem 1.16]{klingler}, we know that $H^0(Y,\textrm{Sym}^i\Omega^1_Y)=0$ for $i<p$, thus it follows that $H^0(X,\textrm{Sym}^{[i]}\Omega^1_X)=0$ for $i<p$. 

If $X$ is a quotient of $\mathcal{D}^I_{p,q}$ for $p\neq q$, we don't need to pass to a $2:1$ quasi-etale cover. Using Selberg's lemma as earlier, we conclude that $H^0(X,\textrm{Sym}^{[i]}\Omega^1_X)=0$ for $i<\textrm{inf}(p,q)$.

Next, suppose $X$ is a quotient of $\mathcal{D}^{IV}_n$. Although $G_0=\textrm{Aut}(\mathcal{D}^{IV}_n)$ has two connected components, the corresponding $K$ is $SO(n,\mathbb{C})\times SO(2,\mathbb{C})$, which is connected. Hence we don't need to pass to a $2:1$ quasi-etale cover. We can write $\mathcal{T}_{X_{reg}}\cong\mathcal{H}om(\mathcal{W},\mathcal{L})$, where $\mathcal{W}$ is an orthogonal vector bundle of rank $n$, and $\mathcal{L}$ is a line bundle on $X_{reg}$. Since $\mathcal{W}$ is orthogonal, it is in particular self-dual, thus satisfies $\textrm{det}(\mathcal{W})\cong\mathcal{O}_{X_{reg}}$. It follows that $K_X\cong\mathcal{L^\vee}^{\otimes n}$, and since $K_X$ is ample, so is $\mathcal{L}$. From the proof of Theorem \ref{thmbk}, we have that $\mathcal{L}^2$, which is also ample, is an irreducible direct factor  of $\textrm{Sym}^2\Omega^1_{X_{reg}}$, and has global sections if $\Gamma$ is suitably chosen. 
On the other hand, we know that for compact quotients $Y$ of irreducible bounded symmetric domains of rank $\ge2$, $\mathcal{T}_Y$ is properly seminegative, i.e., $H^0(Y,\Omega^1_Y)=0$. Consequently, we have $H^0(X,\textrm{Sym}^{[i]}\Omega^1_X)=0$ for $i<2$.

Finally, suppose that $X$ is a quotient of either $\mathcal{D}^{II}_n$ or $\mathcal{D}^{III}_n$. Note that the automorphism groups of both domains are connected. We proceed similarly as in the $\mathcal{D}^I_{p,q}$ by using Selberg's lemma, and arrive at the same result as Theorem \ref{thmk}, i.e., $H^0(X,\textrm{Sym}^{[i]}\Omega^1_X)=0$ for $0<i\le[\frac{n}{2}]$. 
\end{proof}

\begin{remark}
     (i) The uniformization result \cite[Theorem 1.1]{thesis} allows us to consider quotients of bounded symmetric domains by subgroups of the full automorphism group, not just the connected component. \\
     (ii) The essential sharpness of the bounds in \cite[Theorem 1.16]{klingler} is claimed but not justified. It is not clear whether the global sections of $\mathrm{Sym}^{m_{\mathcal{D}}}\Omega^1_Y$ descend to non-trivial global sections of $\mathrm{Sym}^{[m_{\mathcal{D}}]}\Omega^1_X$. Thus we do not know whether the bounds of Theorem \ref{thmk2} are essentially sharp. 
\end{remark}

In contrast to the case of irreducible bounded symmetric domains, symmetric powers of the tangent bundle of the smooth locus of a polydisk quotient are not direct sums of properly seminegative irreducible non-trivial automorphic bundles. More precisely, we have the following.

\begin{proposition}\label{pdq}
    Let $X=\mathbb{H}^n/\Gamma$ be a quotient of the polydisk by a discrete group of automorphisms $\Gamma\subset\mathrm{Aut}(\mathbb{H}^n)=PSL(2,\mathbb{R})^n\rtimes S_n$, where $S_n$ denotes the symmetric group in $n$-letters, such that $\Gamma$ acts fixed point freely in codimension one on $\mathbb{H}^n$. Then $\mathrm{Sym}^i\mathcal{T}_{X_{reg}}$ is a direct sum of non-trivial automorphic vector bundles of strictly negative curvature.
\end{proposition}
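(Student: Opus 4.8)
The plan is to follow the strategy behind Theorem \ref{thmbk} almost verbatim, the essential simplification being that for the polydisk the relevant plethysm is trivial, and the essential difference being the sign of the curvature one reads off at the end. First I would record the homogeneous data: for $\mathbb{H}^n$ one has $G_0=PSL(2,\mathbb{R})^n$, $K_0=SO(2)^n$, and the Hodge component splits as $\mathfrak{g}^{-1,1}=\bigoplus_{j=1}^n L_j$, where $L_j:=\mathfrak{g}^{-1,1}_j$ is the one-dimensional $K$-module given by the holomorphic tangent direction of the $j$-th disk. The finite factor $S_n\subset\mathrm{Aut}(\mathbb{H}^n)$ only permutes these factors; since $S_n$ is finite and acts by isometries of the (product) invariant metric, it will suffice to compute the decomposition and the curvatures for $\Gamma'=\Gamma\cap PSL(2,\mathbb{R})^n$, which has finite index in $\Gamma$ and may be taken torsion-free by Selberg's lemma as in the proof of Theorem \ref{thmk2}; the general case follows by grouping the summands below into $S_n$-orbits, an operation that does not affect curvature.

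Next I would carry out the decomposition. Because each $L_j$ is one-dimensional, the symmetric power involves no genuine representation-theoretic plethysm and simply expands as
\[
\mathrm{Sym}^i\mathfrak{g}^{-1,1}=\mathrm{Sym}^i\Big(\bigoplus_{j=1}^n L_j\Big)=\bigoplus_{\substack{\alpha\in\mathbb{Z}^n_{\ge 0}\\ |\alpha|=i}} L^\alpha,\qquad L^\alpha:=\bigotimes_{j=1}^n L_j^{\otimes a_j}.
\]
Each $L^\alpha$ is a one-dimensional irreducible $K$-module with weight $\sum_j a_j\omega_j$, where $\omega_j$ is the weight of $L_j$; this weight is non-zero for $|\alpha|=i\ge 1$, so the associated automorphic line bundle $\mathcal{L}^\alpha$ on $X_{reg}$ is non-trivial. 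This already exhibits $\mathrm{Sym}^i\mathcal{T}_{X_{reg}}=\bigoplus_{|\alpha|=i}\mathcal{L}^\alpha$ as a direct sum of non-trivial automorphic bundles.

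It remains to compute the curvature of each $\mathcal{L}^\alpha$, and here the contrast with the irreducible case becomes visible. The lowest weight of $L^\alpha$ is $\omega^\alpha=\sum_j a_j\omega_j$, and $\mathfrak{g}=\bigoplus_j\mathfrak{sl}_2$ now carries $n$ highest roots $\mu_1,\dots,\mu_n$, one per factor. Since distinct factors are orthogonal for $\langle\cdot,\cdot\rangle$, I expect $\langle\omega^\alpha,\mu_k\rangle=a_k\langle\omega_k,\mu_k\rangle$, and for a single disk $\omega_k$ is a negative multiple of $\mu_k$, so $\langle\omega_k,\mu_k\rangle<0$. Hence $\langle\omega^\alpha,\mu_k\rangle<0$ for every factor $k$ with $a_k>0$. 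Via the homogeneous curvature formula of \cite[Chapter 10]{mok} this translates into the statement that the Griffiths curvature of $\mathcal{L}^\alpha$ equals $\sum_j a_j\,\Theta(\mathcal{L}_j)$ and is strictly negative along each such factor; in particular $\langle\omega^\alpha,\mu_k\rangle\ne 0$ for at least one $k$, so no summand satisfies the proper-seminegativity criterion $\langle\omega,\mu\rangle=0$ of Proposition \ref{prop0}. This is exactly the reason Proposition \ref{mokp} cannot be brought to bear on $H^0(X,\mathrm{Sym}^{[i]}\Omega^1_X)$.

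The hard part will be the final curvature bookkeeping and the precise meaning of ``strictly negative''. One must check that the single-disk pairing $\langle\omega_k,\mu_k\rangle$ is genuinely non-zero---equivalently, that the tangent line bundle of the disk has non-degenerate negative curvature in its own factor---and convert the representation-theoretic pairings into the Griffiths curvature of $\mathcal{L}^\alpha$ through Mok's homogeneous curvature formula, rather than appealing to Proposition \ref{prop0}, which is formulated only for a single highest root. The subtle point to state carefully is that $\Theta(\mathcal{L}^\alpha)=\sum_j a_j\,\Theta(\mathcal{L}_j)$ is negative definite precisely when $\alpha$ is supported on all $n$ factors (so genuinely only for $i\ge n$) and otherwise merely seminegative with a strictly negative direction; the content relevant to the dichotomy with Theorem \ref{thmbk} is that every summand possesses a strictly negative direction and therefore fails the proper seminegativity exploited by Mok.
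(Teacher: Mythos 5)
Your proposal follows essentially the same route as the paper's proof of Proposition \ref{pdq}: reduce via a quasi-\'etale cover and Selberg's lemma to a torsion-free $\Gamma'\subset PSL(2,\mathbb{R})^n$, split $\mathrm{Sym}^i\mathfrak{g}^{-1,1}$ into one-dimensional $K$-modules $L^\alpha$ indexed by multi-indices, and read off the curvature of each associated automorphic line bundle from a weight pairing. The one substantive difference is the endgame, and there your version is the more careful one. The paper pairs the lowest weight $\tau_\alpha$ of each summand against a single element $\mu=\sum_{k}(L_{1_k}-L_{2_k})$, which is the sum of the highest roots of the $n$ simple factors of $\mathfrak{g}=\mathfrak{sl}(2,\mathbb{C})^n$ and is not itself a root of $\mathfrak{g}$; it gets $\langle\tau_\alpha,\mu\rangle=-2i<0$ and concludes ``strictly negative curvature'' by citing Mok's curvature formula \cite[Proposition 1, p.202]{mok}, whose statement presupposes a well-defined highest root, i.e.\ a simple $\mathfrak{g}$. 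You instead pair against each factor's highest root $\mu_k$ separately, obtaining $\langle\tau_\alpha,\mu_k\rangle=-2a_k$, and correctly observe that the curvature of $\mathcal{L}^\alpha=\bigotimes_j\mathcal{L}_j^{\otimes a_j}$ is $\sum_j a_j\Theta(\mathcal{L}_j)$, hence negative definite only when every $a_j>0$ and otherwise merely seminegative with a strictly negative direction; indeed for $n\ge2$ the summand $\mathcal{L}_1^{\otimes i}$ is never Griffiths-negative, so the proposition's phrase ``strictly negative curvature'' holds only in your weaker sense. What both arguments genuinely establish --- and what the remark following Proposition \ref{pdq} actually uses --- is that every summand is non-trivial and fails the proper-seminegativity criterion $\langle\omega,\mu\rangle=0$ of Proposition \ref{prop0}, so that Proposition \ref{mokp} cannot be brought to bear; your write-up makes this the explicit conclusion, which is the defensible form of the statement.
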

\begin{proof}

Any such $X$ admits a Galois, quasi-\'etale cover $Y$ such that $\mathcal{T}_{Y_{reg}}$ splits as a direct sum of line bundles of negative degree, i.e., $Y\cong\mathbb{H}^n/\Gamma'$, where $\Gamma'\subset PSL(2,\mathbb{R})^n$. Thus any indecomposable direct summand $V$ of $\mathrm{Sym}^i\mathcal{T}_{X_{reg}}$ decomposes as a direct sum of line bundles on $Y_{reg}$. If the claim holds for $Y$, then it holds for $X$ because pullback preserves (semi)negativity/positivity of curvature. Hence we may assume that $\Gamma\subset PSL(2,\mathbb{R})^n$.

Let $G_0=SL(2,\mathbb{R})^n$, with maximal compact subgroup $K_0=U(1)^n$. Their complexifications are $G=SL(2,\mathbb{C})^n$ and $K=(\mathbb{C}^*)^n$ respectively. The Lie algebra $\mathfrak{g}=\mathfrak{sl}(2,\mathbb{C})^n$ is semisimple, and it splits as a direct sum of simple ideals $\mathfrak{g}=\bigoplus_{i=1}^n\mathfrak{g}_i$, where $\mathfrak{g}_i\cong\mathfrak{sl}(2,\mathbb{C})$ for all $i$. Each $\mathfrak{g}_i$ admits a Hodge decomposition given by $\mathfrak{g}_i=\mathfrak{g}_i^{-1,1}\oplus\mathfrak{g}^{0,0}\oplus\mathfrak{g}_i^{1,-1}$, where each summand is isomorphic, as a $K$-module, to $\mathbb{C}$.

The tangent bundle of the smooth locus $X_{reg}$ splits as a direct sum $\mathcal{T}_{X_{reg}}=\bigoplus_{i=i}^n\mathcal{L}_i$, where each $\mathcal{L}_i$ is a line bundle of negative degree. Moreover, each $\mathcal{L}_i$ is the irreducible automorphic line bundle associated to the irreducible $K$-module $\mathfrak{g}_i^{-1,1}\cong\mathbb{C}$. The bundle $\textrm{Sym}^i\mathcal{T}_{X_{reg}}$ is thus the automorphic bundle associated to the $K$-module $\textrm{Sym}^i\mathfrak{g}^{-1,1}=\textrm{Sym}^i\mathbb{C}^{\oplus n}$, and also decomposes as a direct sum of line bundles.

We know from \cite[Proposition 1, p.202]{mok} that $\langle\tau,\mu\rangle$ is fiberwise the maximum value attained by the curvature tensor of any irreducible direct summand $\mathcal{L}$ of $\mathrm{Sym}^i\mathcal{T}_{X_{reg}}$, where $\tau$ denotes the lowest weight of the irreducible $K$-module to which $\mathcal{L}$ is associated, and $\mu$ is the highest root of $\mathfrak{g}$. Following the method of the proof of \cite[Theorem 2.7]{klingler}, we want to show that $\tau$ is non-trivial, and satisfies $\langle\tau,\mu\rangle<0$ for all $i\ge1$.

The Lie algebra $\mathfrak{k}_0$ of $K_0=U(1)^n$ is its own Cartan subalgebra. It consists of $n$-tuples of diagonal $2\times2$ matrices of trace 0. Let $\{E_{i_kj_k}\}_{k=1}^n$ be an $n$-tuple of $2\times2$ matrices, where $E_{i_kj_k}$ denotes the $2\times2$ matrix in the $k$-th position with entries 0, except 1 at the $(i,j)$-th entry, $1\le i,j\le2$. Let $\{L_{i_k}\}_{k=1}^n$ be an $n$-tuple of linear forms on $\mathfrak{k}$, where $L_{i_k}$ takes value 1 at $E_{i_ki_k}$ and 0 at $E_{j_kj_k}$, $j\neq i$. We choose the positive Weyl chamber $C\subset\mathfrak{k}^*_{\mathbb{R}}$ to be the set of $\sum_{k=1}^n\sum_{i=1}^2a_{i_k}L_{i_k}$ with $a_{1_k}\ge a_{2_k}$. The scalar product on $\mathfrak{k}^*_{\mathbb{R}}$ is then given by $\langle\sum_k\sum_ia_{i_k},\sum_k\sum_jb_{j_k}L_{j_k}\rangle=\sum_k\sum_ia_{i_k}b_{i_k}$. \\
\\
Each irreducible $K$-factor of $\mathfrak{g}^{-1,1}$ is a copy of $\mathbb{C}$ corresponding to one of the $\mathcal{L}_i$, and its lowest weight $\tau$ is given by:
\begin{align*}
    \tau=-L_{1_i}+L_{2_i}
\end{align*}
for some $1\le l\le n$. The highest root $\mu$ of $\mathfrak{g}=\mathfrak{sl}(2,\mathbb{C})^n$ is given by:
\begin{align*}
    \mu=L_{1_1}-L_{2_1}+L_{1_2}-L_{2_2}+\dots+L_{1_n}-L_{2_n}.
\end{align*}
Thus we have 
\begin{align*}
    \langle\tau,\mu\rangle=-2,
\end{align*}
which is always negative. Irreducible $K$-factors of $\mathrm{Sym}^i\mathfrak{g}^{-1,1}$ are again copies of $\mathbb{C}$ corresponding to tensor products of powers of the $\mathcal{L}_i$'s, so it follows that $\langle\tau,\mu\rangle<0$ for any such factor. Thus $\textrm{Sym}^i\mathcal{T}_{X_{reg}}$ is a direct sum  of irreducible automorphic line bundles of strictly negative curvature. 
\end{proof}

\begin{remark}
    Equivalently, for $X$ as in Proposition \ref{pdq}, $\mathrm{Sym}^i\Omega^1_{X_{reg}}$ is a direct sum of non-trivial automorphic bundles of strictly positive curvature. Thus we cannot use Mok's results (Proposition \ref{mokp} or \cite[Corollary 1', p.212]{mok}) to say anything about the (non)vanishing of $H^0(X,\mathrm{Sym}^{[i]}\Omega^1_X)$.
\end{remark}

There exist quotients $X$ of the polydisk satisfying $H^0(X,\Omega^1_X)\neq0$, as shown in the following example. Thus we cannot use Theorem \ref{thmazk} to say something about the rigidity of fundamental groups of quotients of the polydisk $\mathbb{H}^n$, even in the smooth case.

\begin{example}[Products of curves]\label{pc}
Let $C_1$,...,$C_n$ be smooth projective curves, each of genus $g_i\ge2$, and consider the product $X=C_1\times\dots\times C_n$. By Simpson's classical uniformization result we know that the universal cover of $X$ is the polydisk $\mathbb{H}^n$. Let $p_i$ denote the projection map $p_i:X\to C_i$. The cotangent bundle of $X$ is given by $\Omega^1_X=\bigoplus_{i=1}^np_i^*\Omega^1_{C_i}$. From the classical Riemann-Roch theorem we know that $\textrm{dim}(H^0(C_i,\Omega^1_{C_i}))=g_i$ for each $i$, so in particular
\begin{align*}
    H^0(X,\Omega^1_X)=\bigoplus_{i=1}^nH^0(X,p_i^*\Omega^1_{C_i})=\sum_{i=1}^ng_i>0.
\end{align*}
Therefore $H^0(X,\mathrm{Sym}^k\Omega^1_X)>0$ for all $k\ge1$.
\end{example}
Taking this a step further, we can consider quotients of products of curves by finite groups. Such varieties and their fundamental groups have been studied extensively, for example in \cite{cat}. 
\begin{example}[Product quotients]
    Let $X=C_1\times\dots\times C_n$ be as in Example \ref{pc} and let $G$ be a finite group of automorphisms acting freely and diagonally on $X$, i.e., $G$ acts on each factor $C_i$ by automorphisms. Let $\pi:X\to X/G=:Y$ be the corresponding quotient. Then $Y$ is smooth, uniformized by $\mathbb{H}^n$, and the cotangent bundle of $Y$ splits as $\Omega^1_Y\cong\bigoplus_{i=1}^n\mathcal{L}_i$, where $\pi^*\mathcal{L}_i\cong p_i^*\Omega^1_{C_i}$. 
    Global sections of $\Omega^1_Y$ come from global sections of the summands $\mathcal{L}_i$. We have 
    \begin{align*}
        H^0(Y,\mathcal{L}_i)=H^0(X,\pi^*\mathcal{L}_i)^G=H^0(Y,p_i^*\Omega^1_{C_i})^G.
    \end{align*}
    Since the map $p_i:X\to C_i$ is projective and has connected fibers, we have $p_{i*}\mathcal{O}_X\cong\mathcal{O}_{C_i}$. From the projection formula it follows that $p_{i*}p_i^*\Omega^1_{C_i}\cong\Omega^1_{C_i}\otimes p_{i*}\mathcal{O}_X\cong\Omega^1_{C_i}$. In particular, $H^0(Y,\mathcal{L}_i)\cong H^0(C_i,\Omega^1_{C_i})^G$.  
    
    If the action of $G$ on $C_i$ is free, then Riemann-Roch says that $H^0(C_i,\Omega^1_{C_i})^G=g(C_i/G)$. So if $C$ is a smooth projective curve of genus $g(C)\ge2$ such that $G$ acts freely on $C$ and $g(C/G)=0$, then the product quotient $X=(C\times\dots\times C)/G$ admits no non-zero symmetric differentials i.e., $H^0(X,\Omega^1_X)=0$. Consequently, all one-dimensional representations of $\pi_1(X)$ are rigid.
\end{example}



\section{Representations of the fundamental group}

The goal of this section is to extend the relationship between global symmetric differential forms and representations of the topological fundamental group to the normal setting. We begin with the following more general version of Arpaura's classical result. 

\begin{proposition}[=Theorem \ref{thmazk}(1)]\label{propa}
    Let $X$ be a normal projective variety. If $\pi_1(X)$ has infinitely many non-isomorphic semisimple representations into $GL(n,\mathbb{C})$, then $H^0(X,\mathrm{Sym}^{[i]}\Omega^1_X)\neq0$ for some $1\le i\le n$.
\end{proposition}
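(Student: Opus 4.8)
The plan is to bootstrap from the smooth projective case, which is Arapura's original result (\cite[Proposition 2.4]{arapura}), using precisely the two reduction facts recalled immediately after Theorem \ref{thmazk}. Note that the statement to be proved is the contrapositive of the rigidity assertion in Theorem \ref{thmazk}(1): having infinitely many pairwise non-isomorphic semisimple representations into $GL(n,\mathbb{C})$ means exactly that the rank-$n$ character variety is positive-dimensional, i.e.\ that not every rank-$n$ representation is rigid, and we must extract from this a nonzero reflexive symmetric differential of order at most $n$.

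First I would fix a resolution of singularities $f:\tilde X\to X$ which is an isomorphism over the regular locus $X_{reg}$; such a resolution exists, and $\tilde X$ is smooth and projective. By Takayama's theorem (\cite[Theorem 1.1]{takayama}), which applies because klt singularities are rational, the induced map $f_*:\pi_1(\tilde X)\to\pi_1(X)$ is an isomorphism. Consequently the hypothesis transfers verbatim to $\tilde X$: since the property of admitting infinitely many pairwise non-isomorphic semisimple representations into $GL(n,\mathbb{C})$ depends only on the abstract isomorphism type of the group, $\pi_1(\tilde X)$ also has this property. I would then invoke Arapura's result in the smooth projective setting applied to $\tilde X$, which yields an index $1\le i\le n$ with $H^0(\tilde X,\mathrm{Sym}^i\Omega^1_{\tilde X})\neq 0$.

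Finally I would descend this nonvanishing to $X$ by the second reduction fact. Restricting a nonzero section over $\tilde X$ to the open set $f^{-1}(X_{reg})\cong X_{reg}$ produces a section of $\mathrm{Sym}^i\Omega^1_{X_{reg}}$, which is nonzero because $\tilde X$ is irreducible and $\mathrm{Sym}^i\Omega^1_{\tilde X}$ is locally free (hence torsion-free) there. Since $X$ is normal, the complement $X\setminus X_{reg}$ has codimension $\ge 2$, so this section extends across it by reflexivity of $\mathrm{Sym}^{[i]}\Omega^1_X$, giving $H^0(X,\mathrm{Sym}^{[i]}\Omega^1_X)\neq 0$, as desired.

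The only substantive input is Arapura's smooth statement; everything else is formal, so I do not expect a deep obstacle. The one point requiring care is the bookkeeping in the last step: one must ensure the resolution is taken to be an isomorphism over $X_{reg}$ so that the restriction-and-extension argument is literally valid, and one must check that the phrase ``infinitely many non-isomorphic semisimple representations'' in the hypothesis matches precisely the non-rigidity condition used in the statement of Arapura's proposition (namely positive-dimensionality of the character variety). Provided these match, the reduction is complete.
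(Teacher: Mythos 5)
Your proof is correct and follows essentially the same route as the paper: resolve singularities, transfer the hypothesis via Takayama's theorem identifying $\pi_1(\tilde X)$ with $\pi_1(X)$, apply Arapura's result on the smooth model, and descend the nonvanishing by restricting to $X_{reg}$ and using reflexivity of $\mathrm{Sym}^{[i]}\Omega^1_X$. The only difference is cosmetic: you make explicit the torsion-freeness and codimension-two extension points that the paper leaves implicit.
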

\begin{proof}
    Let $f:\Tilde{X}\to X$ be a resolution of singularities of $X$. Then we know, for example by \cite[Theorem 2.1]{arapura2}, that the induced map $f_*:\pi_1(\Tilde{X})\to\pi_1(X)$ is surjective. In particular, $\pi_1(\Tilde{X})$ has infinitely many non-isomorphic semisimple representations into $GL(n,\mathbb{C})$. By \cite[Proposition 2.4]{arapura}, it follows that $H^0(\Tilde{X},\textrm{Sym}^i\Omega^1_{\Tilde{X}})\neq0$ for some $1\le i\le n$.
    
    Let $\Tilde{X}^o\subset\Tilde{X}$ be the open subset restricted to which $f$ is an isomorphism. Then by restricting global sections to $\Tilde{X}^o$, we have that $H^0(\Tilde{X}^o,\textrm{Sym}^i\Omega^1_{\Tilde{X}^o})\neq0$ for some $i$. Via the isomorphism $\Tilde{X}^o\cong X_{reg}$ we have
    \begin{align*}
        H^0(\Tilde{X}^o,\textrm{Sym}^i\Omega^1_{\Tilde{X}^o})=H^0(X_{reg},\textrm{Sym}^i\Omega^1_{X_{reg}})=H^0(X,\textrm{Sym}^{[i]}\Omega^1_X)\neq0
    \end{align*}
    for some $i$. This proves the assertion.
\end{proof}

\begin{remark}
    The above statement can be rephrased as follows: if $H^0(X,\mathrm{Sym}^{[i]}\Omega^1_X)=0$ for $1\le i\le r$, then every representation $\rho:\pi_1(X)\to GL(r,\mathbb{C})$ is rigid. 
\end{remark}

Using the same argument as in the proof of Proposition 3.1, and the non-Archimedean version of Arapura's result due to Klingler and Zuo, we arrive at the following non-Archimedean version of Proposition \ref{propa}.

\begin{proposition}[=Theorem \ref{thmazk}(2)]\label{propa2}
    Let $X$ be a normal projective variety, and suppose that $H^0(X,\mathrm{Sym}^{[i]}\Omega^1_X)=0$ for all $0<i\le r$, for some $r\in\mathbb{N}$. Let $F$ be a non-Archimedean local field. Then any reductive representation $\rho:\pi_1(X)\to GL(r,F)$ has bounded image. 
    
    If moreover $F$ has characteristic zero the reductiveness assumption is not needed.
\end{proposition}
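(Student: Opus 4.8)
The plan is to run the contrapositive of the desired implication and reduce, exactly as in the proof of Proposition \ref{propa}, to the smooth projective non-Archimedean statement, which we are free to cite. Concretely, suppose $\rho:\pi_1(X)\to GL(r,F)$ is reductive (or arbitrary, when $\mathrm{char}\,F=0$) and has \emph{unbounded} image; the goal is to produce a nonzero reflexive symmetric differential on $X$ of some order $1\le i\le r$, contradicting the hypothesis. First I would fix a resolution $\pi:\tilde{X}\to X$. Since $X$ is klt, Takayama's theorem \cite[Theorem 1.1]{takayama} gives an isomorphism $\pi_1(\tilde{X})\cong\pi_1(X)$, so $\rho$ pulls back to a representation $\tilde{\rho}:\pi_1(\tilde{X})\to GL(r,F)$ with the \emph{same} image. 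The properties ``reductive'' and ``has unbounded image'' depend only on the subgroup $\rho(\pi_1(X))=\tilde{\rho}(\pi_1(\tilde{X}))\subset GL(r,F)$, hence transfer verbatim to $\tilde{\rho}$; in particular no analytic work is needed on the singular space.

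Next I would invoke the smooth projective non-Archimedean statement, namely the contrapositive of \cite[Theorem 1.6(ii)]{klingler} (due to Zuo \cite{zuo} in the characteristic zero case and to Klingler in general): since $\tilde{X}$ is smooth projective and $\tilde{\rho}$ is reductive with unbounded image (the reductiveness hypothesis being dropped when $\mathrm{char}\,F=0$), one obtains $H^0(\tilde{X},\mathrm{Sym}^i\Omega^1_{\tilde{X}})\ne0$ for some $1\le i\le r$. This is the analytic engine; it is supplied via harmonic maps to Bruhat--Tits buildings and the associated symmetric-differential construction, and we use it as a black box.

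Then, exactly as in Proposition \ref{propa}, I would transport this section to $X$. Let $\tilde{X}^o\subset\tilde{X}$ be the open locus on which $\pi$ restricts to an isomorphism onto $X_{reg}$. Restricting a nonzero section to $\tilde{X}^o$, and using the identification $\tilde{X}^o\cong X_{reg}$ together with the normality of $X$ (so that reflexive symmetric powers are recovered from their restriction to the smooth locus), we get
\begin{align*}
H^0(X,\mathrm{Sym}^{[i]}\Omega^1_X)=H^0(X_{reg},\mathrm{Sym}^i\Omega^1_{X_{reg}})=H^0(\tilde{X}^o,\mathrm{Sym}^i\Omega^1_{\tilde{X}^o})\ne0
\end{align*}
for some $1\le i\le r$, contradicting the vanishing assumption. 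I emphasize that the section on $\tilde{X}$ need not be $\pi$-related to any global section of $X$; only its restriction to $\tilde{X}^o$ is used, which is all the reflexive extension requires.

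Once the smooth non-Archimedean result is available, the klt extension is formal, the only genuine inputs being Takayama's $\pi_1$-isomorphism and the reflexive-extension identity above. Accordingly, I do not expect a substantive obstacle; the point demanding the most care is purely one of bookkeeping, namely confirming that the cited smooth statement is exactly the one needed (unbounded image forces a nonzero symmetric differential of order $\le r$, with the characteristic-zero refinement on reductivity), and that boundedness and reductivity are invariants of the image and therefore insensitive to the isomorphism $\pi_1(\tilde{X})\cong\pi_1(X)$.
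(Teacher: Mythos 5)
Your proposal is correct and is essentially the paper's own argument: the paper likewise transfers the vanishing hypothesis to a resolution $\tilde{X}$ via the restriction to $\tilde{X}^o\cong X_{reg}$ and reflexive extension, applies the smooth non-Archimedean result of Klingler--Zuo there, and concludes using Takayama's isomorphism $\pi_1(\tilde{X})\cong\pi_1(X)$. Your phrasing as a contrapositive is only a cosmetic difference.
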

\begin{proof}
    Let $f:\Tilde{X}\to X$ be a resolution of singularities of $X$. Then $H^0(\Tilde{X},\textrm{Sym}^i\Omega^1_{\Tilde{X}})=0$ for all $0<i\le r$. Indeed if not, then a non-zero section of $\textrm{Sym}^i\Omega^1_{\Tilde{X}}$ restricted to the open subset of $\Tilde{X}$ where $\pi$ is an isomorphism would give a non-zero section of $\textrm{Sym}^i\Omega^1_{X_{reg}}$, and hence of $\textrm{Sym}^{[i]}\Omega^1_X$, which contradicts the assumption.\\
    Then it follows from \cite[Theorem 1.6(ii)]{klingler}, that any reductive representation $\rho:\pi_1(\Tilde{X})\to GL(r,F)$ has bounded image. Since $f_*:\pi(\Tilde{X})\to\pi_1(X)$ is surjective, the same holds for any reductive representation $\pi_1(X)\to GL(r,F)$.

    The second part of the Proposition was proved by Klingler in the compact K\"ahler case, and by the surjectivity of $f_*$ it holds also in the normal case.
\end{proof}

We have the following arithmetic corollary, which is a singular version of \cite[Corollary 1.8]{klingler}.

\begin{corollary}\label{coro}
    Let $X$ be a normal projective variety, and suppose $H^0(X,\mathrm{Sym}^{[i]}\Omega^1_X)=0$ for $0<i\le r$. Then any representation $\rho:\pi_1(X)\to GL(r,\mathbb{C})$ is conjugate to a representation $\rho_0:\pi_1(X)\to GL(r,\mathcal{O}_K)$, where $\mathcal{O}_K$ is the ring of integers of some number field $K\subset\mathbb{C}$.
\end{corollary}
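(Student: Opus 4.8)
The plan is to combine the two halves of the klt version of Arapura's theorem that have just been established—Proposition \ref{propa} (rigidity) and Proposition \ref{propa2} (non-Archimedean boundedness)—in the same way Klingler deduces \cite[Corollary 1.8]{klingler} from Theorem \ref{thmazk} in the smooth case. The vanishing hypothesis $H^0(X,\mathrm{Sym}^{[i]}\Omega^1_X)=0$ for $0<i\le r$ is exactly the input that both propositions require, so the arithmetic content is already available; the remaining work is to package it into an integral model of $\rho$.

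First I would descend $\rho$ to a number field. Because $X$ is klt, Takayama's theorem \cite[Theorem 1.1]{takayama} gives $\pi_1(X)=\pi_1(\tilde X)$ for a resolution $\tilde X$, and the latter group is finitely presented since $\tilde X$ is smooth projective; hence the Betti character variety $M=\mathrm{Hom}(\pi_1(X),GL(r,\mathbb{C}))/\!/GL(r,\mathbb{C})$ is a scheme of finite type defined over $\mathbb{Q}$ (indeed over $\mathbb{Z}$). By Proposition \ref{propa} the vanishing hypothesis forces $M$ to be zero-dimensional, so every closed point has residue field a number field; the standard theory of rigid representations (Simpson) then lets me replace $\rho$ by a conjugate $\rho_1:\pi_1(X)\to GL(r,K)$ for some number field $K\subset\mathbb{C}$.

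Next I would impose integrality one place at a time. For each finite place $v$ of $K$ the completion $K_v$ is a non-Archimedean local field of characteristic zero, and composing $\rho_1$ with $GL(r,K)\hookrightarrow GL(r,K_v)$ yields a representation to which Proposition \ref{propa2} applies—crucially, since $\mathrm{char}\,K_v=0$ no reductiveness hypothesis is needed. Thus $\rho_1(\pi_1(X))$ is bounded in $GL(r,K_v)$, hence preserves an $\mathcal{O}_{K_v}$-lattice $\Lambda_v\subset K_v^r$. Since $\pi_1(X)$ is finitely generated, the matrix entries of the image generate a finitely generated subring of $K$ and are therefore $v$-integral for all but finitely many $v$; at those places I take $\Lambda_v=\mathcal{O}_{K_v}^r$. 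The local-to-global dictionary for lattices over the Dedekind domain $\mathcal{O}_K$ then produces a single $\rho_1$-invariant $\mathcal{O}_K$-lattice $\Lambda\subset K^r$, and choosing a basis of $\Lambda$ conjugates $\rho_1$ into $GL(r,\mathcal{O}_K)$, which is the desired $\rho_0$. Alternatively, since $\pi_1(X)=\pi_1(\tilde X)$ and the vanishing passes to $\tilde X$ exactly as in the proof of Proposition \ref{propa2}, one may simply invoke \cite[Corollary 1.8]{klingler} for the smooth projective variety $\tilde X$.

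I expect the main obstacle to be the first step: rigidity only tells us that the point $[\rho]\in M$ is defined over a number field, and upgrading this to a genuine conjugation of $\rho$ into $GL(r,K)$ requires care with the field of moduli versus the field of definition and, for non-semisimple $\rho$, with the fact that a $\mathbb{C}$-point of the GIT quotient only records the semisimplification—one treats the semisimplification first and argues that zero-dimensionality of $M$ rules out nontrivial deformations. A secondary technical point is that the glued lattice $\Lambda$ is only a projective $\mathcal{O}_K$-module, so to land in $GL(r,\mathcal{O}_K)$ on the nose one may have to enlarge $K$ (for instance to its Hilbert class field, where the Steinitz class of $\Lambda$ becomes trivial and $\Lambda$ becomes free); this is harmless since the statement asks only for some number field.
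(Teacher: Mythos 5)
Your proposal is correct, but your main line of argument is not the one the paper uses: the paper's entire proof is the one sentence you relegate to an ``alternatively'' at the end. It passes to a resolution $\Tilde{X}\to X$, observes (as in the proof of Proposition \ref{propa2}) that the vanishing hypothesis transfers to $\Tilde{X}$, invokes \cite[Corollary 1.8]{klingler} for the smooth projective variety $\Tilde{X}$, and concludes via Takayama's equality $\pi_1(\Tilde{X})=\pi_1(X)$. Your primary argument instead re-derives Klingler's Corollary 1.8 in the klt setting from scratch, by combining Proposition \ref{propa} (zero-dimensionality of the character variety, hence definability over a number field) with Proposition \ref{propa2} applied at every finite place $v$ of $K$ (boundedness in $GL(r,K_v)$, hence an invariant $\mathcal{O}_{K_v}$-lattice), and then gluing the local lattices over the Dedekind domain $\mathcal{O}_K$. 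This is a legitimate and more self-contained route, and it has the virtue of making visible exactly where each vanishing hypothesis enters; the cost is that you must shoulder the technical points you correctly flag --- field of moduli versus field of definition, the fact that a point of the GIT quotient only remembers the semisimplification, and the possible non-freeness of the glued lattice (fixable by enlarging $K$) --- all of which are already handled inside Klingler's proof and are bypassed entirely by the paper's reduction to the smooth case. Either way the statement is established; if you keep your longer argument, you should either fill in those three points in detail or explicitly cite the corresponding steps of Klingler's proof rather than leaving them as acknowledged obstacles.
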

\begin{proof}
    Any resolution of singularities $\Tilde{X}$ of $X$ satisfies the assumptions. The conclusion thus holds for $\Tilde{X}$ by \cite[Corollary 1.8]{klingler}. We use again that the induced map $\pi_1(\Tilde{X})\to\pi_1(X)$ is surjective.
\end{proof}

Theorem \ref{thmk2} and Proposition \ref{propa} together yield the following observation about representations of the fundamental group of a projective, klt quotient of an irreducible bounded symmetric domain $\mathcal{D}$.

\begin{corollary}
Let $X$ be a normal, projective quotient of an irreducible bounded symmetric domain $\mathcal{D}$. Then any representation $\rho:\pi_1(X)\to GL(r,\mathbb{C})$ is rigid for all $1\le r\le m_{\mathcal{D}}-1$.  
\end{corollary}

We can use the fact that every normal, projective quotient of a bounded symmetric domain admits a quasi-\'etale cover which is smooth and projective, to derive a link between global reflexive symmetric differentials and rigidity of the topological fundamental group of the smooth locus. We make use of the following auxiliary statements.

\begin{lemma}[{\cite[Lemma 2.7]{hnb}}]\label{aux1}
    Let $G$ be a locally profinite group, and let $H$ an open subgroup of $G$ of finite index. If $(\rho,V)$ is a smooth representation of $G$, then $V$ is $G$-semisimple if and only if it is $H$-semisimple.
\end{lemma}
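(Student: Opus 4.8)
The plan is to prove the two implications separately; both ultimately rest on the fact that $n:=[G:H]$ is finite and invertible in $\mathbb{C}$, which lets us run Maschke-type averaging arguments over a fixed set of coset representatives $g_1,\dots,g_n$ for $G/H$.

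For the implication ``$H$-semisimple $\Rightarrow$ $G$-semisimple'' I would show that every $G$-submodule $W\subseteq V$ admits a $G$-stable complement. Since $V$ is $H$-semisimple and $W$ is in particular an $H$-submodule, there is an $H$-equivariant projection $P\colon V\to W$. I would then symmetrize $P$ over the cosets by setting
\[
\widetilde{P}=\frac1n\sum_{i=1}^{n}\rho(g_i)\,P\,\rho(g_i)^{-1}.
\]
Because $W$ is $G$-stable, $\widetilde{P}$ still maps $V$ onto $W$ and restricts to the identity on $W$; and because $P$ is $H$-equivariant, the cancellation $gg_i=g_{\sigma(i)}h_i$ with $h_i\in H$ shows $\rho(h_i)P\rho(h_i)^{-1}=P$, so that $\widetilde{P}$ commutes with the full $G$-action. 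Hence $\ker\widetilde{P}$ is the desired $G$-stable complement, and $V$ is $G$-semisimple. This direction is routine and uses nothing beyond invertibility of $n$.

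The subtler implication ``$G$-semisimple $\Rightarrow$ $H$-semisimple'' I expect to be the main obstacle, because the naive averaging fails: an $H$-submodule $U\subseteq V$ need not be $G$-stable, and its translates $g_iU$ are modules for the conjugate groups $g_iHg_i^{-1}$ rather than for $H$. To get around this I would first reduce to the case where $V$ is a simple $G$-module, using that restriction commutes with direct sums and that an arbitrary direct sum of semisimple $H$-modules is semisimple. Then I would pass to the normal core $H'=\bigcap_{i}g_iHg_i^{-1}$, which is normal in $G$, of finite index, and contained in $H$. For the \emph{normal} subgroup $H'$ a Clifford-type argument applies: choosing a simple $H'$-submodule $S\subseteq V$, normality guarantees that each translate $g\cdot S$ is again a simple $H'$-submodule, so the $G$-submodule $\sum_{g}g\cdot S$ is a sum of simple $H'$-modules; by simplicity of $V$ this sum is all of $V$, whence $\mathrm{Res}_{H'}V$ is $H'$-semisimple. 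Finally I would bootstrap from $H'$ back up to $H$ by applying the already-established first implication to the finite-index inclusion $H'\subseteq H$.

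The one point requiring care, and the place where smoothness (or finite-dimensionality) of $V$ is used, is the existence of a simple $H'$-submodule $S$ in the Clifford step: for finite-dimensional $V$ this is automatic, while for a general smooth representation it follows from the existence of irreducible subrepresentations afforded by the smooth structure. Modulo this input the argument is complete, and I would expect the bulk of a full write-up to consist of verifying the $G$-equivariance of $\widetilde{P}$ and the normality/conjugation bookkeeping in the Clifford step, both of which are mechanical once the set-up is in place.
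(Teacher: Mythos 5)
The paper does not actually prove this lemma: it is imported verbatim as \cite[Lemma 2.7]{hnb} from Bushnell--Henniart, so there is no in-paper argument to compare against. Judged on its own, your architecture is the standard one (and essentially the one in the cited source): Maschke averaging over coset representatives for ``$H$-semisimple $\Rightarrow$ $G$-semisimple'', and for the converse a reduction to $V$ $G$-irreducible, passage to the normal core $H'=\bigcap_i g_iHg_i^{-1}$ (which is indeed open, normal and of finite index), a Clifford-type translation argument over $H'$, and a bootstrap back to $H$ via the first implication. The averaging direction is complete as written, modulo the standard equivalence ``every subrepresentation is a direct summand $\Leftrightarrow$ semisimple'', which holds in the smooth category and is established early in \cite{hnb}.

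There is, however, one genuine gap, precisely at the point you flag: the existence of a simple $H'$-submodule $S\subseteq V$. Your proposed justification --- that irreducible subrepresentations exist ``afforded by the smooth structure'' --- is false as a general principle: a nonzero smooth representation of a locally profinite group need not contain any irreducible subrepresentation (e.g.\ $C_c^\infty(\mathbb{Q}_p)$ under translation by $\mathbb{Q}_p$ contains none, since a $\chi$-eigenfunction is nowhere vanishing and hence never compactly supported). What saves you is the finite-index structure, not smoothness: since $V$ is $G$-irreducible, $V=\sum_i \rho(g_i)\langle H'v\rangle$ for any $v\neq 0$, so $V$ is finitely generated over $H'$ and therefore admits a maximal proper $H'$-submodule $M$, i.e.\ an irreducible quotient. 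The intersection $\bigcap_i \rho(g_i)M$ is a proper $G$-submodule of the irreducible $V$ (using normality of $H'$), hence zero, so $V$ embeds $H'$-equivariantly into the finite direct sum $\bigoplus_i V/\rho(g_i)M$ of irreducible $H'$-modules and is therefore $H'$-semisimple. This repairs the Clifford step (and in fact renders the ``sum of translates of $S$'' argument unnecessary); with that substitution your proof is complete.
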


As a corollary, we obtain the following useful observation.

\begin{lemma}\label{lemg}
Let $G$ be a locally profinite group and let $H$ be an open subgroup of $G$ of finite index. If $G$ has infinitely many non-isomorphic semisimple representations into $GL(n,\mathbb{C})$, then so does $H$.
\end{lemma}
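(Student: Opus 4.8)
The plan is to study the restriction map on isomorphism classes of $n$-dimensional representations and to show that it has finite fibers; since $G$ is assumed to have infinitely many non-isomorphic semisimple $n$-dimensional representations, finiteness of the fibers forces its image, which consists of $H$-representations, to be infinite as well.

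First I would check that restriction is well-defined at the level of the relevant isomorphism classes. A finite-dimensional complex representation of a discrete (hence locally profinite) group is automatically smooth, so Lemma \ref{aux1} applies: if $V$ is a semisimple $G$-representation of dimension $n$, then $\mathrm{Res}^G_H V$ is a semisimple $H$-representation of the same dimension $n$. Thus $\mathrm{Res}^G_H$ descends to a well-defined map from the set of isomorphism classes of $n$-dimensional semisimple $G$-representations to the set of isomorphism classes of $n$-dimensional semisimple $H$-representations.

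The heart of the argument is the finiteness of the fibers of this map. Fix an $n$-dimensional semisimple $H$-representation $W$ and suppose $V$ is an $n$-dimensional semisimple $G$-representation with $\mathrm{Res}^G_H V\cong W$. Because $[G:H]<\infty$, smooth induction and coinduction coincide, and $\mathrm{Ind}^G_H W$ is finite-dimensional of dimension $[G:H]\cdot n$. By Frobenius reciprocity, the unit of the (restriction, induction) adjunction produces a $G$-equivariant map $V\to\mathrm{Ind}^G_H\mathrm{Res}^G_H V\cong\mathrm{Ind}^G_H W$, and this unit is injective since $v\mapsto(g\mapsto gv)$ admits evaluation at the identity as a left inverse. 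Hence every $V$ in the fiber over $W$ is isomorphic to a semisimple subrepresentation of the single finite-dimensional representation $\mathrm{Ind}^G_H W$. By the Jordan--H\"older theorem, the irreducible constituents of any such subrepresentation lie among the finitely many composition factors of $\mathrm{Ind}^G_H W$, each occurring with multiplicity bounded by its multiplicity in $\mathrm{Ind}^G_H W$; consequently there are only finitely many isomorphism classes in the fiber. It follows that the infinite set of isomorphism classes of $n$-dimensional semisimple $G$-representations maps with finite fibers into the set of $n$-dimensional semisimple $H$-representations, which must therefore be infinite. This is exactly the assertion that $H$ has infinitely many non-isomorphic semisimple representations into $GL(n,\mathbb{C})$.

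I expect the main obstacle to be the representation-theoretic bookkeeping specific to locally profinite groups, namely verifying that $\mathrm{Ind}^G_H$ is finite-dimensional and that the Frobenius-reciprocity unit is injective in the smooth setting. For finite-index $H$ both facts are standard, since induction and coinduction agree and the unit has the explicit left inverse described above; these can be cited alongside \cite{hnb}. The only genuine content is identifying the correct adjunction and confirming that semisimplicity of $V$ together with Jordan--H\"older yields the finiteness of the fiber, the rest being formal.
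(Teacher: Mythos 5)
Your proof is correct and follows essentially the same strategy as the paper: both arguments use Lemma \ref{aux1} to see that restriction preserves semisimplicity and then show that the restriction map has finite fibers by controlling the fiber over $W$ via the finite-dimensional representation $\mathrm{Ind}^G_H W$. The only cosmetic difference is that you realize members of the fiber as subrepresentations of the induced module via the unit of adjunction and invoke Jordan--H\"older, whereas the paper realizes them as quotients via the counit and uses semisimplicity of $\mathrm{Ind}^G_H W$ (again via Lemma \ref{aux1}); the two are interchangeable here.
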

\begin{proof}
    By Lemma \ref{aux1}, the restriction of any semisimple representation of $G$ into $GL(n,\mathbb{C})$ is a semisimple representation of $H$ into $GL(n,\mathbb{C})$. Thus it remains to show that only finitely many non-isomorphic semisimple representations of $G$ restrict to the same representation of $H$ up to isomorphism.
    
    To see this, let $\sigma:H\to GL(n,\mathbb{C})$ be a semisimple representation, and consider the induced representation $\textrm{Ind}^G_H\sigma$ of $G$. The restriction of $\textrm{Ind}^G_H\sigma$ to $H$ is semisimple because the associated module is a direct sum of $H$-modules $\mathbb{C}^n\otimes h$, where $h$ runs over the set of coset representatives of $G/H$. Then again by Lemma \ref{aux1} it follows that $\textrm{Ind}^G_H\sigma$ semisimple. Every semisimple representation $\rho$ of $G$ into $GL(n,\mathbb{C})$ that restricts to $\sigma$ is a quotient of $\textrm{Ind}^G_H\sigma$. Since $\textrm{Ind}^G_H\sigma$ is semisimple, there can only be finitely many such representations $\rho$. This concludes the proof.
\end{proof}

We can now extend the vanishing result \cite[Theorem 1.11]{klingler} to a wider class of lattices in $SU(n,1)$. We have the following.

\begin{proposition}\label{ballprop}
    Let $\Gamma\subset SU(n,1)$ be a cocompact lattice acting fixed point freely in codimension one on $\mathbb{B}^n$, and suppose $\Gamma$ contains a torsion-free Kottwitz lattice $\Gamma'$ as a finite index subgroup. Let $X=\mathbb{B}^n/\Gamma$ be the corresponding ball quotient.\\
    If $n+1$ is prime, then $H^0(X,\mathrm{Sym}^{[i]}\Omega^1_X)=0$ for $1\le i\le n-1$.
\end{proposition}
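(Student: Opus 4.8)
The plan is to follow, nearly verbatim, the descent strategy used in the connected cases of the proof of Theorem \ref{thmk2}, but with the geometric vanishing of \cite[Theorem 1.16]{klingler} replaced by the arithmetic vanishing of \cite[Theorem 1.11]{klingler}. The guiding observation is that global reflexive symmetric differentials pull back injectively along a finite quasi-\'etale cover, so that vanishing on the cover descends to the base.

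First I would set up the cover associated to the given finite-index subgroup, with no appeal to Selberg's lemma needed since $\Gamma'$ is supplied by hypothesis. As $\Gamma'$ is torsion-free it acts freely on $\mathbb{B}^n$, so $Y=\mathbb{B}^n/\Gamma'$ is smooth; being of finite index in the cocompact lattice $\Gamma$ it is itself cocompact, so $Y$ is smooth and projective. The induced map $\gamma\colon Y\to X$ is finite and surjective, and because $\Gamma$ acts fixed point freely in codimension one its branch locus has codimension at least two; hence $\gamma$ is quasi-\'etale and $X$ is projective with klt (quotient) singularities, as the statement presupposes.

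Next I would import the arithmetic input: since $\Gamma'$ is a Kottwitz lattice with $n\ge 2$ and $n+1$ prime, \cite[Theorem 1.11]{klingler} yields $H^0(Y,\mathrm{Sym}^i\Omega^1_Y)=0$ for all $1\le i\le n-1$. To transfer this to $X$, I would use that for the quasi-\'etale morphism $\gamma$ there is an isomorphism $\gamma^{[*]}\mathrm{Sym}^{[i]}\Omega^1_X\cong\mathrm{Sym}^i\Omega^1_Y$ (with $Y$ smooth), exactly as in the proof of Theorem \ref{thmk2}. A nonzero reflexive section of $\mathrm{Sym}^{[i]}\Omega^1_X$ restricts to a nonzero symmetric differential on $X_{reg}$; pulling back along the \'etale map $\gamma^{-1}(X_{reg})\to X_{reg}$ keeps it nonzero, and it then extends to a nonzero global section of $\mathrm{Sym}^i\Omega^1_Y$ because $Y$ is smooth and the removed locus has codimension at least two. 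Thus $\gamma^{[*]}$ is injective on global sections, and the vanishing on $Y$ forces $H^0(X,\mathrm{Sym}^{[i]}\Omega^1_X)=0$ for $1\le i\le n-1$.

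I do not anticipate a genuine obstacle, as the geometric descent mirrors Theorem \ref{thmk2} and all the hard content is absorbed into \cite[Theorem 1.11]{klingler}. The two points requiring care are verifying that $\gamma$ is quasi-\'etale, which is what makes the reflexive pullback isomorphism available, and respecting the direction of the implication: a finite cover generally carries \emph{more} symmetric differentials than its base, so vanishing does not ascend from $X$ to $Y$, whereas the injection $H^0(X,\mathrm{Sym}^{[i]}\Omega^1_X)\hookrightarrow H^0(Y,\mathrm{Sym}^i\Omega^1_Y)$ does let it descend from $Y$ to $X$. This parallels, on the geometric side, the way Lemma \ref{lemg} transports rigidity from $\Gamma'$ up to $\Gamma$ on the representation-theoretic side.
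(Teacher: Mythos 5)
Your proposal is correct and follows essentially the same route as the paper: pass to the smooth projective cover $Y=\mathbb{B}^n/\Gamma'$, invoke \cite[Theorem 1.11]{klingler} there, and descend the vanishing via the reflexive pullback isomorphism $\gamma^{[*]}\mathrm{Sym}^{[i]}\Omega^1_X\cong\mathrm{Sym}^i\Omega^1_Y$ along the quasi-\'etale map. The only difference is that you spell out the injectivity of $\gamma^{[*]}$ on global sections and the quasi-\'etaleness of $\gamma$, which the paper leaves implicit.
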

\begin{proof}
    The quotient map $\mathbb{B}^n\to X$ factors as 
    \begin{align*}
        \mathbb{B}^n\to\mathbb{B}^n/\Gamma'=:Y\to X,
    \end{align*}
    where $Y$ is smooth and projective, and the map $f:Y\to X$ is quasi-\'etale. It follows that $f^{[*]}\mathrm{Sym}^{[i]}\Omega^1_X\cong\mathrm{Sym}^i\Omega^1_Y$. Since $H^0(Y,\mathrm{Sym}^i\Omega^1_Y)=0$ for $1\le i\le n-1$ by \cite[Theorem 1.11]{klingler}, the same holds for $X$. 
\end{proof}

We can thus apply Propositions \ref{propa}, \ref{propa2}, and Corollary \ref{coro} to say something about representations of $\pi_1(X)$, when $X$ is a ball quotient as in Proposition \ref{ballprop}. In particular, we have that any representation $\rho:\pi_1(X)\to GL(r,\mathbb{C})$ is rigid for $1\le r\le n-1$.\\
\\
When $X$ is singular, the fundamental group of the smooth locus $X_{reg}$ is usually the more interesting one to consider. In case of ball quotients, we have

\begin{proposition}
    Let $\Gamma\subset SU(n,1)$ be a cocompact lattice in $SU(n,1)$ and suppose $\Gamma$ contains a torsion-free Kottwitz lattice $\Gamma'$ of finite index. Let $X=\mathbb{B}^n/\Gamma$ be the corresponding ball quotient.\\
    If $n+1$ is prime, then every representation $\rho:\pi_1(X_{reg})\to GL(n-1,\mathbb{C})$ is rigid.
\end{proposition}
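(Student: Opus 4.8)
The plan is to transfer the rigidity already available on the smooth cover $Y=\mathbb{B}^n/\Gamma'$ down to $\pi_1(X_{reg})$, exploiting the fact that $\Gamma'$ sits inside $\pi_1(X_{reg})$ as a finite-index subgroup and that the vanishing statement for symmetric differentials on $Y$ is exactly the input needed for Arapura's theorem. The three ingredients are: an identification of $\pi_1(Y^o)$ with $\Gamma'$ as a finite-index subgroup of $\pi_1(X_{reg})$, the smooth vanishing result on $Y$ feeding Arapura's theorem, and Lemma \ref{lemg} to propagate finiteness of the set of semisimple representations from $\Gamma'$ up to $\pi_1(X_{reg})$.

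First I would pin down the fundamental group of the smooth locus. Since $\Gamma'$ is torsion-free it acts freely on $\mathbb{B}^n$, so $Y=\mathbb{B}^n/\Gamma'$ is smooth projective with $\pi_1(Y)=\Gamma'$, and the quotient map $f:Y\to X$ by $G:=\Gamma/\Gamma'$ is finite and Galois. Setting $Y^o:=f^{-1}(X_{reg})$, purity of the branch locus (Zariski--Nagata) over the regular locus $X_{reg}$, together with the assumption that $\Gamma$ acts fixed-point-freely in codimension one (so that $f$ carries no codimension-one ramification), shows that $f:Y^o\to X_{reg}$ is finite étale and Galois with group $G$, hence an unramified covering in the analytic topology. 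Because $X$ is normal, $X_{sing}$ has codimension $\ge 2$, so $Y\setminus Y^o=f^{-1}(X_{sing})$ is an analytic subset of codimension $\ge 2$ in the smooth variety $Y$; removing such a subset does not change the fundamental group, whence $\pi_1(Y^o)=\pi_1(Y)=\Gamma'$. Covering space theory then realizes $\Gamma'=\pi_1(Y^o)$ as a subgroup of $\pi_1(X_{reg})$ of index $|G|=[\Gamma:\Gamma']$. (Equivalently, since the free locus $U\subset\mathbb{B}^n$ is simply connected with complement of codimension $\ge 2$ and $X_{reg}=U/\Gamma$, one obtains $\pi_1(X_{reg})\cong\Gamma$ directly.)

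Next I would invoke the vanishing on the cover. By \cite[Theorem 1.11]{klingler} we have $H^0(Y,\mathrm{Sym}^i\Omega^1_Y)=0$ for $1\le i\le n-1$, so Arapura's theorem in the smooth projective case \cite[Proposition 2.4]{arapura} implies that $\Gamma'=\pi_1(Y)$ admits only finitely many non-isomorphic semisimple representations into $GL(n-1,\mathbb{C})$; that is, every such representation of $\Gamma'$ is rigid.

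Finally I would propagate finiteness upward. Regarding the discrete groups $\pi_1(X_{reg})$ and its finite-index subgroup $\Gamma'$ as locally profinite (in the discrete topology every subgroup is compact and open, and every representation is smooth), the contrapositive of Lemma \ref{lemg} applied with $G=\pi_1(X_{reg})$ and $H=\Gamma'$ yields that $\pi_1(X_{reg})$ likewise has only finitely many non-isomorphic semisimple representations into $GL(n-1,\mathbb{C})$. Consequently the GIT quotient $\mathrm{Hom}(\pi_1(X_{reg}),GL(n-1,\mathbb{C}))//GL(n-1,\mathbb{C})$ is zero-dimensional, and every representation $\rho:\pi_1(X_{reg})\to GL(n-1,\mathbb{C})$ is rigid. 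I expect the main obstacle to be the first step: identifying $\pi_1(Y^o)$ with $\Gamma'$ as a finite-index subgroup of $\pi_1(X_{reg})$, since this is precisely where purity of the branch locus, the fixed-point-free-in-codimension-one hypothesis, and the codimension $\ge 2$ invariance of $\pi_1$ must all be combined correctly; the remaining steps are then formal consequences of \cite[Proposition 2.4]{arapura} and Lemma \ref{lemg}.
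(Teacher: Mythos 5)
Your proposal is correct and follows essentially the same route as the paper: establish rigidity for $\Gamma'=\pi_1(Y)$ on the smooth Kottwitz cover $Y=\mathbb{B}^n/\Gamma'$ via the vanishing of symmetric differentials (Klingler's Theorem 1.11) combined with Arapura's theorem, and then transfer it to the finite-index overgroup $\pi_1(X_{reg})\cong\Gamma$ using Lemma \ref{lemg}. The only difference is that you justify the identification $\pi_1(X_{reg})\cong\Gamma$ and the finite-index inclusion of $\Gamma'$ in detail (purity of the branch locus, codimension-$\ge 2$ invariance of $\pi_1$), which the paper simply asserts.
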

\begin{proof}
    Let $Y:=\mathbb{B}^n/\Gamma'$ and note that every representation $\Gamma'=\pi_1(Y)\to GL(n-1,\mathbb{C})$ is rigid by Propositions \ref{ballprop} and \ref{propa}. Since $\Gamma=\pi_1(X_{reg})$, the conclusion follows from Lemma \ref{lemg}.
\end{proof}

As a consequence, we obtain the following rigidity result, which is an extension of \cite[Theorem 1.3(i)]{klingler} to a wider class of lattices in $SU(n,1)$.

\begin{corollary}[=Proposition \ref{pkott}]
    Let $\Gamma\subset SU(n,1)$ be a cocompact lattice acting fixed point freely in codimension one on $\mathbb{B}^n$, and suppose $\Gamma$ contains a Kottwitz lattice of finite index. \\
    If $n+1$ is prime, then any representation $\rho:\Gamma\to GL(n-1,\mathbb{C})$ is rigid.
\end{corollary}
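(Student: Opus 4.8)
The plan is to deduce rigidity for the abstract group $\Gamma$ from the arithmetic vanishing already available for the Kottwitz lattice it contains, transporting rigidity from the finite-index subgroup up to $\Gamma$ via Lemma \ref{lemg}. Write $\Gamma'\subset\Gamma$ for the Kottwitz lattice of finite index. Since a Kottwitz lattice yields a smooth compact ball quotient, $\Gamma'$ is torsion-free; hence it acts freely on $\mathbb{B}^n$, the quotient $Y:=\mathbb{B}^n/\Gamma'$ is a smooth projective variety, and $\mathbb{B}^n$ is its universal cover, so $\pi_1(Y)=\Gamma'$.

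First I would record the vanishing $H^0(Y,\mathrm{Sym}^i\Omega^1_Y)=0$ for $1\le i\le n-1$, which is \cite[Theorem 1.11]{klingler} (the input underlying Proposition \ref{ballprop}) and uses that $n+1$ is prime. By Arapura's theorem in the smooth projective case, in the form of Proposition \ref{propa} and the reformulation in the remark following it, this vanishing makes the character variety $\mathrm{Hom}(\Gamma',GL(n-1,\mathbb{C}))//GL(n-1,\mathbb{C})$ zero-dimensional; equivalently, $\Gamma'=\pi_1(Y)$ has only finitely many isomorphism classes of semisimple representations into $GL(n-1,\mathbb{C})$. Now $\Gamma'$ is a finite-index, hence open, subgroup of the (discrete, therefore locally profinite) group $\Gamma$, so I would invoke Lemma \ref{lemg} contrapositively: were $\Gamma$ to admit infinitely many non-isomorphic semisimple representations into $GL(n-1,\mathbb{C})$, so would $\Gamma'$, a contradiction. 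Therefore $\Gamma$ admits only finitely many such, i.e. every $\rho:\Gamma\to GL(n-1,\mathbb{C})$ is rigid.

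The hypothesis that $\Gamma$ acts fixed point freely in codimension one is what makes this a statement about the geometry of $X=\mathbb{B}^n/\Gamma$. It says no nontrivial element of $\Gamma$ fixes a divisor, so the locus $F\subset\mathbb{B}^n$ of points with nontrivial stabilizer is a closed analytic subset of complex codimension $\ge 2$, and the absence of pseudo-reflections forces, via Chevalley--Shephard--Todd, the quotient to be singular at the image of every such point; thus $X_{reg}=(\mathbb{B}^n\setminus F)/\Gamma$. As $F$ has real codimension $\ge 4$, the complement $\mathbb{B}^n\setminus F$ remains simply connected, so it is the universal cover of $X_{reg}$ with deck group $\Gamma$, giving $\pi_1(X_{reg})=\Gamma$. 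Under this identification the rigidity obtained above is exactly rigidity of all $(n-1)$-dimensional representations of $\pi_1(X_{reg})$.

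The step I expect to need the most care is the bookkeeping that joins these inputs rather than any single deep fact, since the hard analysis enters only through \cite[Theorem 1.11]{klingler} and Proposition \ref{propa}. Specifically, I would be careful that the arithmetic vanishing is asserted precisely for the torsion-free Kottwitz lattice $\Gamma'$ and not for an arbitrary finite-index subgroup, since passing to a cover may create new symmetric differentials; that the hypotheses of Lemma \ref{lemg} genuinely hold for the discrete pair $\Gamma'\subset\Gamma$; and that the equivalences among ``zero-dimensional character variety'', ``finitely many semisimple representations'', and ``every representation rigid'' are spelled out, as these form the hinge between the cohomological vanishing and the group-theoretic transfer.
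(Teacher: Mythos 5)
Your proof is correct and follows essentially the same route as the paper: vanishing of symmetric differentials on the smooth Kottwitz quotient $Y=\mathbb{B}^n/\Gamma'$ via \cite[Theorem 1.11]{klingler}, rigidity for $\Gamma'=\pi_1(Y)$ via Proposition \ref{propa}, and transfer to the finite-index overgroup $\Gamma=\pi_1(X_{reg})$ via Lemma \ref{lemg}. Your added justification that $\pi_1(X_{reg})=\Gamma$ is a welcome elaboration of a step the paper asserts without comment.
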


Another consequence of Proposition \ref{propa} and Theorem \ref{thmk2} is the following observation about the rigidity of representations of $\pi_1(X_{reg})$ when $X$ is a normal projective quotient of an irreducible bounded symmetric domain. 

\begin{corollary}
    Let $X$ be a normal projective quotient of an irreducible bounded symmetric domain $\mathcal{D}$. Then any representation $\pi_1(X_{reg})\to GL(r,\mathbb{C})$ is rigid for all $1\le r\le m_{\mathcal{D}}-1$.  
\end{corollary}
\begin{proof}
    By Selberg's lemma, there is a Galois, quasi-etale cover $\gamma:Y\to X$ such that $Y$ is a smooth, projective quotient of $\mathcal{D}$. If we write $X=\mathcal{D}/\Gamma$ for some $\Gamma\subset\textrm{Aut}(\mathcal{D})$, then $\pi_1(X_{reg})\cong\Gamma$, and $\pi_1(Y)$ is a normal subgroup of $\Gamma$ of finite index. 
    
    By Theorem \ref{thmk} we know that $H^0(Y,\textrm{Sym}^k\Omega^1_Y)=0$ for all $1\le k<m_{\mathcal{D}}$. Then by Proposition \ref{propa} it follows that every representation $\rho:\pi_1(Y)\to GL(r,\mathbb{C})$ is rigid for all $r\le m_{\mathcal{D}}-1$. From Lemma \ref{lemg}, it follows that the same holds for $\pi_1(X_{reg})$, and we are done.
\end{proof}

Extending the result \cite[Theorem 6.1]{bkt}, we arrive at the following link between representations of the fundamental group and the existence of global reflexive symmetric differentials in the normal setting. The proof is the same as that of Proposition \ref{propa}.

\begin{proposition}
    Let $X$ be a normal projective variety. Suppose there is a representation $\rho:\pi_1(X)\to GL(n,\mathbb{C})$ with infinite image, for some $n$. Then $X$ has a non-zero reflexive symmetric differential.
\end{proposition}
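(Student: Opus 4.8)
The plan is to mirror the proof of Proposition \ref{propa}, with the sole structural change that the smooth projective input is now the original Bogomolov--Klingler--Totaro result rather than Arapura's theorem. First I would pass to a resolution of singularities $\pi:\Tilde{X}\to X$. By Takayama's theorem \cite[Theorem 1.1]{takayama} the induced map gives an isomorphism $\pi_1(\Tilde{X})\cong\pi_1(X)$, so the given representation $\rho:\pi_1(X)\to GL(n,\mathbb{C})$ pulls back to a finite dimensional representation of $\pi_1(\Tilde{X})$ which again has infinite image.

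The key step is then to invoke the smooth projective case of \cite[Theorem 6.1]{bkt}: since $\Tilde{X}$ is smooth and projective and $\pi_1(\Tilde{X})$ carries a finite dimensional representation with infinite image, $\Tilde{X}$ admits a non-zero global symmetric differential, i.e.\ $H^0(\Tilde{X},\mathrm{Sym}^k\Omega^1_{\Tilde{X}})\neq0$ for some $k\ge1$. Finally I would descend this section to $X$ exactly as in Proposition \ref{propa}. Let $\Tilde{X}^o\subset\Tilde{X}$ be the open locus on which $\pi$ is an isomorphism; as $\mathrm{Sym}^k\Omega^1_{\Tilde{X}}$ is locally free and $\Tilde{X}^o$ is dense, restricting the chosen non-zero section does not annihilate it, so $H^0(\Tilde{X}^o,\mathrm{Sym}^k\Omega^1_{\Tilde{X}^o})\neq0$. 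Transporting along the isomorphism $\Tilde{X}^o\cong X_{reg}$ and using that $X$ is normal and $\mathrm{Sym}^{[k]}\Omega^1_X$ is reflexive, sections over $X_{reg}$ extend uniquely across the codimension $\ge 2$ singular locus, giving
\begin{align*}
H^0(\Tilde{X}^o,\mathrm{Sym}^k\Omega^1_{\Tilde{X}^o})=H^0(X_{reg},\mathrm{Sym}^k\Omega^1_{X_{reg}})=H^0(X,\mathrm{Sym}^{[k]}\Omega^1_X)\neq0,
\end{align*}
which is the desired conclusion.

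I expect the only genuinely non-formal ingredient to be the appeal to the smooth case of \cite[Theorem 6.1]{bkt}; once this is granted, the passage from the smooth resolution back to the klt variety $X$ is entirely standard. The only point requiring (minor) care is that restriction of the global section to the dense open set $\Tilde{X}^o$ preserves non-vanishing, which holds because $\mathrm{Sym}^k\Omega^1_{\Tilde{X}}$ is torsion free on the smooth variety $\Tilde{X}$; everything else is the same reflexive-extension argument already used for Proposition \ref{propa}.
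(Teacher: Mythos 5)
Your proposal is correct and follows exactly the same route as the paper: pass to a resolution, use Takayama's theorem to identify fundamental groups, invoke the smooth projective case of \cite[Theorem 6.1]{bkt}, and then descend the resulting symmetric differential via restriction to the isomorphism locus and reflexive extension, just as in the proof of Proposition \ref{propa}. You simply spell out the descent step in more detail than the paper, which just cites the earlier argument.
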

\begin{proof}
    Let $f:\Tilde{X}\to X$ be a resolution of singularities. Then $f_*:\pi_1(\Tilde{X})\to\pi_1(X)$ is surjective, and the representation $\rho\circ f_*:\pi_1(\Tilde{X})\to GL(n,\mathbb{C})$ has infinite image. By \cite[Theorem 6.1]{bkt}, the assertion holds for $\Tilde{X}$, i.e, $H^0(\Tilde{X},\textrm{Sym}^i\Omega^1_{\Tilde{X}})\neq0$ for some $i>0$. Then by the same argument as in the proof of Proposition \ref{propa}, it follows that $H^0(X,\textrm{Sym}^{[i]}\Omega^1_X)\neq0$, and we are done.
\end{proof}

In the same spirit, we have the following normal version of \cite[Theorem 0.1]{bkt}.

\begin{proposition}[=Proposition \ref{psing}]\label{pbktsing}
    Let $X$ be a normal projective variety. Suppose there is a finite dimensional representation of $\pi_1(X)$ over some field with infinite image. Then $X$ admits a non-zero reflexive symmetric differential.
\end{proposition}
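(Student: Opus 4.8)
The plan is to reduce to the smooth projective case by passing to a resolution of singularities, exactly in the spirit of the proof of Proposition \ref{propa}. The only genuine difference from that proof is that the hypothesis here concerns a representation over an \emph{arbitrary} field rather than over $\mathbb{C}$, so the input on the resolution should be the general Brunebarbe--Klingler--Totaro theorem \cite[Theorem 0.1]{bkt} rather than its $\mathbb{C}$-specific form. Everything else is the now-standard ``restrict and take the reflexive hull'' mechanism used throughout Section 3.

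First I would choose a resolution of singularities $\pi:\Tilde{X}\to X$. Since $X$ is projective and klt it is in particular normal and irreducible, so $\Tilde{X}$ is a smooth projective, hence compact Kähler, manifold, and Takayama's theorem \cite[Theorem 1.1]{takayama} supplies the crucial identification $\pi_1(\Tilde{X})\cong\pi_1(X)$. Consequently the given finite dimensional representation of $\pi_1(X)$ over some field with infinite image transports to a finite dimensional representation of $\pi_1(\Tilde{X})$ over the same field with infinite image. I would then apply \cite[Theorem 0.1]{bkt} to $\Tilde{X}$: being a compact Kähler manifold carrying such a representation, it admits a nonzero global symmetric differential, i.e. $H^0(\Tilde{X},\mathrm{Sym}^i\Omega^1_{\Tilde{X}})\neq0$ for some $i>0$.

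To descend this to $X$, let $\Tilde{X}^o\subset\Tilde{X}$ be the open locus over which $\pi$ is an isomorphism and restrict the section. Because $\Tilde{X}$ is irreducible and $\mathrm{Sym}^i\Omega^1_{\Tilde{X}}$ is locally free, a nonzero section cannot vanish on the dense open set $\Tilde{X}^o$, so its restriction is again nonzero. Via the isomorphism $\Tilde{X}^o\cong X_{reg}$ it becomes a nonzero element of $H^0(X_{reg},\mathrm{Sym}^i\Omega^1_{X_{reg}})$, which equals $H^0(X,\mathrm{Sym}^{[i]}\Omega^1_X)$ since $\mathrm{Sym}^{[i]}\Omega^1_X$ is reflexive, agrees with $\mathrm{Sym}^i\Omega^1_{X_{reg}}$ on $X_{reg}$, and $X\setminus X_{reg}$ has codimension at least $2$ by normality. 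This produces the desired nonzero reflexive symmetric differential.

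The argument carries no essential obstacle of its own: all the depth is imported through two black boxes, namely Takayama's invariance of the topological fundamental group under resolution and the compact Kähler case of \cite[Theorem 0.1]{bkt}. The only point deserving a moment of care is the restriction step, where one must confirm that the section on $\Tilde{X}$ does not vanish identically on $\Tilde{X}^o$; this is immediate from irreducibility and local freeness, as noted above. I therefore expect the writeup to be a short verbatim adaptation of the proof of Proposition \ref{propa}.
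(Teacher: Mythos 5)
Your proposal is correct and is essentially identical to the paper's argument: the paper deduces this proposition by the same resolution-of-singularities reduction used for Proposition \ref{propa}, invoking Takayama's theorem to identify $\pi_1(\Tilde{X})$ with $\pi_1(X)$, applying \cite[Theorem 0.1]{bkt} to the smooth model, and descending the resulting symmetric differential via restriction to $\Tilde{X}^o\cong X_{reg}$ and reflexive extension. Your extra remark justifying why the section does not vanish on the dense open set is a welcome, if minor, clarification.
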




\begin{thebibliography}{20}

\bibitem{alperin} R.C. Alperin, "An elementary account of Selberg's lemma." \textit{Enseign. Math(2)}, 33(3-4):269-273, (1987).

\bibitem{arapura} Donu Arapura, "Higgs bundles, integrability, and holomorphic forms", \textit{Motives, Polylogarithms and Hodge Theory, Part II, Irvine, CA}, Intl. Press, Sommerville, 605-624 (2002).

\bibitem{arapura2} Donu Arapura, Alexandru Dimica and Richard Hain, "On the fundamental groups of normal varieties", \textit{Communications in contemporary mathematics}, 18.04 (2016): 1550065.

\bibitem{bkt} Yohan Brunebarbe, Bruno Klingler, and Burt Totaro, "Symmetric differentials and the fundamental group", \textit{Duke Math. J.}, 162(14), 2797-2813 (2013).

\bibitem{cat} Ingrid Bauer, Fabrizio Catanese, Fritz Grunewald, and Roberto Pignatelli, "Quotients of products of curves, new surfaces with $p_g=0$ and their fundamental groups." \textit{American Journal of Mathematics}, 134(4), 993-1049 (2012).

\bibitem{hnb} Colin J. Bushnell and Guy Henniart, "The local Langlands conjecture for GL(2)", Vol. 335, Springer Science \& Business Media (2006).

\bibitem{klingler} Bruno Klingler, "Symmetric differentials, K\"{a}hler groups and ball quotients", \textit{Inventiones mathematicae}, 192, 257-286 (2013).

\bibitem{mok} Ngaiming Mok, "Metric Rigidity Theorem on Hermitian Symmetric Manifolds", World Science Publishing, Singapore (1989).

\bibitem{thesis} Aryaman Patel, "Uniformization of complex projective klt varieties by bounded symmetric domains", \textit{arXiv preprint arXiv:2301.07591} (2023). 


\bibitem{weyman} Jerzy Weyman, "Cohomology of Vector Bundles and Syzygies", Cambridge Tracts in Mathematics, vol. 149, Cambridge University Press, Cambridge (2003).

\bibitem{zuo} Kang Zuo, "Representations of Fundamental Groups of Algebraic Varieties", Vol. 1708, Springer Science \& Business Media (1999).

\end{thebibliography}
\end{document}